\documentclass[a4paper, british, reqno]{amsart}

\usepackage[utf8]{inputenc}
\usepackage[T1]{fontenc}
\usepackage{lmodern}
\usepackage{babel}
\usepackage{csquotes}
\usepackage{fullpage}
\usepackage{parskip}
\usepackage{enumitem}
\usepackage{adjustbox}

\usepackage{mathtools}
\usepackage{amssymb}
\usepackage{tikz-cd}
\usetikzlibrary{calc}
\usepackage[all]{xy}

\usepackage[
backend=biber,
style=mathalphabetic,
maxnames=999, giveninits, useprefix,
maxalphanames=4, minalphanames=3,
doi=false, url=false,
dashed=false
]{biblatex}
\DeclareLabelalphaNameTemplate{
	\namepart[use=true, pre=true, strwidth=1, compound=true]{prefix}
	\namepart[compound=true]{family}
}
\usepackage[colorlinks, citecolor=blue]{hyperref}

\usepackage[capitalise, noabbrev]{cleveref}
\newcommand*{\eqrefathome}[1]{\textnormal{(#1)}}
\creflabelformat{equation}{\eqrefathome{#2#1#3}}
\Crefname{diagram}{Diagram}{Diagrams}
\creflabelformat{diagram}{\eqrefathome{#2#1#3}}

\mathtoolsset{mathic}   

\makeatletter
\newcommand*{\comment}[4][]{%
	\@ifundefined{c@#2}{\newcounter{#2}}{}%
	\begingroup
	\ifblank{#1}{%
		\newcommand*{\header}{\csname the#2\endcsname}%
	}{%
		\newcommand*{\header}{(#1\csname the#2\endcsname)}%
	}%
	\addtocounter{#2}{1}%
	\textnormal{\textcolor{#3}{\header.[#4\@]}}%
	\endgroup
}
\makeatother

\newcommand*{\comp}{\circ}
\newcommand*{\from}{\colon}
\newcommand*{\cosmash}{\diamond}

\newcommand*{\eqdef}{\coloneq}
\newcommand*{\iso}{\cong}

\newcommand*{\placeholder}{\:\cdot\:}
\newcommand*{\adj}{\dashv}
\newcommand*{\vadj}{\rotatebox[origin=c]{-90}{\(\adj\)}}
\newcommand{\noproof}{\qed}

\newcommand*{\opp}[1]{{#1}^{\operatorfont op}}

\newcommand*{\Equiv}{\simeq}
\newcommand*{\mapping}{\colon}

\DeclareMathOperator{\Eq}{Eq}
\DeclareMathOperator{\Ker}{Ker} 
\DeclareMathOperator{\Img}{Im}

\DeclarePairedDelimiterX{\IndArr}[1]{(}{)}{
	\renewcommand*{\and}{,}
	#1
}
\DeclarePairedDelimiterX{\CoindArr}[1]{\langle}{\rangle}{
	\renewcommand*{\and}{,}
	#1
}
\DeclarePairedDelimiterX{\GenRel}[2]{\langle}{\rangle}{#1 \mathrel{\delimsize|} #2}

\newcommand*{\CatSymb}[1]{\mathcal{#1}}
\newcommand*{\CatName}[1]{\textnormal{#1}}
\newcommand*{\CondName}[1]{\textnormal{(#1)}}
\newcommand*{\defn}[1]{\emph{#1}}
\newcommand*{\surname}[1]{\textsc{#1}}

\newcommand*{\C}{\CatSymb{C}}
\newcommand*{\V}{\CatSymb{V}}

\newcommand*{\E}{\CatSymb{E}}

\newcommand*{\Grp}{\CatName{Grp}}
\newcommand*{\PXMod}{\CatName{PXMod}}
\newcommand*{\XMod}{\CatName{XMod}}
\newcommand*{\RG}{\CatName{RG}}
\newcommand*{\Grpd}{\CatName{Grpd}}
\newcommand*{\Cat}{\CatName{Cat}}
\newcommand*{\Set}{\CatName{Set}}
\newcommand*{\SSES}{\CatName{SSES}}
\newcommand*{\SES}{\CatName{SES}}
\newcommand*{\Pt}{\CatName{Pt}}

\newcommand*{\SH}{\CondName{SH}}
\newcommand*{\NH}{\CondName{NH}}
\newcommand*{\LACC}{\CondName{LACC}}
\newcommand*{\PCM}{\CondName{PCM}}
\newcommand*{\PFF}{\CondName{PFF}}

\newtheorem{theorem}{Theorem}[section]
\newtheorem{proposition}[theorem]{Proposition}
\AddToHook{env/proposition/begin}{\crefalias{theorem}{proposition}}
\newtheorem{lemma}[theorem]{Lemma}
\AddToHook{env/lemma/begin}{\crefalias{theorem}{lemma}}
\newtheorem{corollary}[theorem]{Corollary}
\AddToHook{env/corollary/begin}{\crefalias{theorem}{corollary}}

\theoremstyle{definition}

\AddToHook{env/definition/begin}{\crefalias{theorem}{definition}}
\newtheorem{definitions}[theorem]{Definitions}
\AddToHook{env/definitions/begin}{\crefalias{theorem}{definitions}}

\theoremstyle{remark}
\newtheorem{remark}[theorem]{Remark}
\AddToHook{env/remark/begin}{\crefalias{theorem}{remark}}
\newtheorem{example}[theorem]{Example}
\AddToHook{env/example/begin}{\crefalias{theorem}{example}}

\tikzcdset{
	mono/.style=tail
}
\tikzcdset{
	epi/.style=two heads
}
\tikzcdset{
	nmono/.style=Triangle[{open, reversed}]->
}
\tikzcdset{
	repi/.style=-Triangle[open]
}
\tikzcdset{
	squared/.style={sep={#1em,between origins}},
	squared/.default=4.5
}
\tikzcdset{
	induced/.style=dashed
}

\DeclareMathDelimiter\AMSlrcorner{\mathclose}{AMSa}{"79}{AMSa}{"79}

\newcommand*{\pb}[2][.2]{
	\arrow[#2, to path={
		let
		\p0 = ($(\tikztotarget.center)-(\tikztostart.center)$)    
		in
		(\tikztostart.center) -- +(-#1*\y0,#1*\y0) node{\Large{\(\AMSlrcorner\)}}
	}, phantom]
}

\newdir{>>}{{}*!/3.5pt/:(1,-.2)@^{>}*!/3.5pt/:(1,+.2)@_{>}*!/7pt/:(1,-.2)@^{>}*!/7pt/:(1,+.2)@_{>}} 
\newdir{ >>}{{}*!/8pt/@{|}*!/3.5pt/:(1,-.2)@^{>}*!/3.5pt/:(1,+.2)@_{>}}
\newdir{ |>}{{}*!/-3.5pt/@{|}*!/-8pt/:(1,-.2)@^{>}*!/-8pt/:(1,+.2)@_{>}} 
\newdir{ >}{{}*!/-8pt/@{>}}
\newdir{>}{{}*:(1,-.2)@^{>}*:(1,+.2)@_{>}}
\newdir{<}{{}*:(1,+.2)@^{<}*:(1,-.2)@_{<}}

\def\pushout{%
	\ar@{-}[]+L+<-6pt,1pt>;[]+LU+<-6pt,6pt>%
	\ar@{-}[]+U+<-1pt,6pt>;[]+LU+<-6pt,6pt>}

\begin{document}

\title{Kaluzhnin--Krasner embedding of precrossed modules}

\author{Maxime \surname{Culot}}
\author{Bo~Shan \surname{Deval}}

\email{maxime.culot@uclouvain.be}
\email{bo.deval@uclouvain.be}

\address[Maxime \surname{Culot}, Bo~Shan \surname{Deval}]{\foreignlanguage{french}{Institut de Recherche en Mathématique et Physique, Université catholique de Louvain, Chemin du Cyclotron~2 bte~L7.01.02, B--1348 Louvain-la-Neuve}, Belgium}    

\thanks{As a Ph.D.\ student, the second author was funded by \emph{Formation à la Recherche dans l'Industrie et dans l'Agriculture (FRIA)}}

\begin{abstract}
	The \emph{Kaluzhnin--Krasner embedding} establishes that, for a group extension, the middle group can be embedded into the wreath product of its kernel and cokernel. Recently, this construction has been generalised in a categorical framework, recovering both the classical group-theoretic result and its analogue for Lie algebras.

	In this article, we apply this categorical framework to two specific cases: precrossed modules and crossed modules (over groups). For precrossed modules, we introduce a Kaluzhnin--Krasner embedding---previously unformulated---by rigorously following the steps of the categorical procedure. In contrast, for crossed modules, we encounter substantial obstacles: while we present partial positive results, we also explain why constructing a full-fledged Kaluzhnin--Krasner embedding in this context is far more difficult.
\end{abstract}

\subjclass[2020]{16B50, 18D15, 18E13, 18G45, 20E22}
\keywords{Wreath product, (split) extension, locally algebraically cartesian closed category, precrossed module, crossed module, semi-abelian category}

\maketitle

\tableofcontents

\section*{Introduction}
It is well known that, given two groups \(A\) and \(X\), any group \(G\) viewed as an extension from \(A\) to \(X\), i.e.\ in the middle of a short exact sequence
\[
	\begin{tikzcd}
		0 \arrow[r] & A \arrow[r, "k", nmono] & G \arrow[r, "f", repi] & X \arrow[r] & 0\text{,}
	\end{tikzcd}
\]
can be embedded into the \emph{wreath product} \(A\wr X\). This latter group is the semi-direct product \(\Set(X,A)\rtimes X\), where \(\Set(X,A)\) is the set of functions from \(X\) to \(A\) equipped with the pointwise multiplication (\((h\cdot h')(x)\coloneq h(x)\cdot h'(x)\) for \(h\), \(h'\in \Set(X,A)\)) and the group action of \(X\) on \(\Set(X,A)\) is given by \(h^{x'}(x)\coloneq h(x\cdot x')\) for \(h\in \Set(X,A)\) and \(x\), \(x'\in X\).\footnote{Throughout the article, following~\cite{DGMVdL}, we write the action of an element \(x\) on an element \(a\) as a superscript \(a^{x}\). Note that this is notation for a \emph{left} action: \((a^{x})^{x'}=a^{x'x}\).} This embedding, called the \emph{Kaluzhnin--Krasner embedding}~\cite{KK51}, is given by
\[
	\phi_G\colon G\to A\wr X\mapping g\mapsto (h_g,f(g))
\]
where \(h_g\colon X\to A\mapping x\mapsto \theta(x)\cdot g\cdot \theta(x\cdot f(g))^{-1}\) for a chosen set-theoretical section \(\theta\) of \(f\).

Since this result has been extended to Lie algebras~\cite{PeRaSh} and to cocommutative Hopf algebras~\cite{BaSiTr14}, it seems that this construction may be expressed in a universal way. This led the authors of~\cite{DGMVdL} to examine in detail the categorical structures arising from the Kaluzhnin--Krasner embedding of groups, with the goal of recovering both the group and Lie algebra cases using a categorical language.

In short, we may state the Kaluzhnin--Krasner embedding in the category \(\Grp\) of groups as an adjunction \begin{tikzcd}[cramped]
	\SES_X(\Grp) \arrow[r, "U", shift left=2] \arrow[r, "\vadj", phantom] & \Grp \arrow[l, "W", shift left=2]
\end{tikzcd} where \(U\) is the forgetful functor that sends an extension over the group \(X\) to its kernel object. In order to generalise this result, the authors of \cite{DGMVdL} investigated the chain of adjunctions
\[
	\begin{tikzcd}
		\SES_X(\Grp) \arrow[r, "L", shift left=2] \arrow[r, "\vadj", phantom] & \SSES_X(\Grp) \arrow[r, "K", shift left=2] \arrow[r, "\vadj", phantom] \arrow[l, "P", shift left=2] & \Grp \arrow[l, "R", shift left=2]
	\end{tikzcd}
\]
where \(\SSES_X(\Grp)\) denotes the category of split extensions over \(X\) in \(\Grp\), \(L\) is the left adjoint to the forgetful functor \(P\) and \(R\) is the right adjoint to the \emph{kernel functor \(K\)} (see~\cite[Section~3]{DGMVdL}).

Even if the existence of \(L\) does not require any further assumption on the underlying category, the existence of the adjoint \(R\) does: we need to consider a \emph{locally algebraically cartesian closed (\LACC{}) category}~\cite{Gra10}. Actually all the known examples for a Kaluzhnin--Krasner embedding satisfy this requirement: \(\Grp\) is \LACC{} since it is the category of internal groups over \(\Set\), a cartesian closed category with pullbacks~\cite[Proposition~5.3]{Gra12}, so is the category of cocommutative Hopf algebras over a field \(\mathbb{K}\) for the same reason\footnote{It is well known that the category of cocommutative Hopf algebras over a field \(\mathbb{K}\) is equivalent to the category of internal objects in the category of cocommutative coalgebras over \(\mathbb{K}\), which is a finitely complete cartesian closed category~\cite{Ba74,GM-VdL18}.} and, finally, the category of Lie algebras is \LACC{} as well~\cite{Gray12b}.

Such a context is quite restrictive among semi-abelian categories (in the sense of Janelidze--Márki--Tholen~\cite{JMT02}). For instance, among all \emph{varieties of non-associative algebras over an infinite field \(\mathbb{K}\) of characteristic different from \(2\)}, the category of Lie algebras is the only one to be \LACC{}~\cite{GVdL19,GVdL19b}. However, being a \LACC{} category does not guarantee an explicit description of a Kaluzhnin--Krasner embedding: as expressed in~\cite[Section~4]{DGMVdL}, we need to compute explicitly the kernel of the induced arrow \(\CoindArr{f\and 1_X}\colon G+X\to X\) case-by-case.

In this article, we establish a Kaluzhnin--Krasner embedding for the category \(\PXMod\) of precrossed modules (over groups), which is \LACC{} as the category of groups internal to the category \(\RG\) of reflexive graphs of sets. Namely, given an extension of precrossed modules
\[
	\begin{tikzcd}
		0 \arrow[r] & A \arrow[r, "k", nmono] & G \arrow[r, "f", repi] & X \arrow[r] & 0\text{,}
	\end{tikzcd}
\]
we want to embed \(G\) into a ``\emph{wreath product}'' in \(\PXMod\).

More precisely, such a result is decomposed into several intermediary results. In \cref{subsec - definitions}, we recall some basic results and characterisations of precrossed modules over \(\Grp\) or a general semi-abelian category. In particular, we consider a precrossed module as a group equipped with two endomorphisms (see \cref{lemma - RG(V) variety}). This observation is crucial to easily compute the kernel of \(\CoindArr{f\and 1_X}\) in \(\PXMod\) from the characterisation of the same kernel in \(\Grp\) (see \cref{subsect - Ker in Grp} and \cref{subsec - kernels in PXMod and XMod}).

With this computation of the kernel, we can apply all the universal machinery developed in \cite{DGMVdL} to obtain the embedding in \(\PXMod\). In \cref{subsec - KK in groups}, we recall the main steps for \(\Grp\). This is crucial to understand all the steps that we will pass through for \(\PXMod\): the description of the power objects in \(\RG\) (see \cref{subsubsec - power object in PXMod}), how we can construct the right adjoint \(R\colon \PXMod\to \SSES_X(\PXMod)\) for a given precrossed module \(X\) by applying Gray's construction~\cite[Proposition~5.3]{Gra12}, and the definition of the morphism \(\chi\colon \Ker(\CoindArr{f\and 1_X})\to A\), which requires a suitable section of the precrossed module morphism \(f\) (see \cref{sec - suitable sections in PXMod}). Finally, in \cref{subsubsection - conclusion in PXMod}, we sum up all those results to apply in \(\PXMod\) the same procedure as in \(\Grp\).

The last part of the document is dedicated to the category \(\XMod\) of crossed modules (over groups) which is also \LACC{} for the same reason as \(\Grp\) and \(\PXMod\). In \cref{subsection - general results about XMod}, we recall the adjunction between \(\PXMod\) and \(\XMod\), which admits an easy description since \(\Grp\) satisfies the ``\emph{Smith is Huq}''~\cite{MVdL12} and ``\emph{Normality of Higgins}''~\cite{AlanThesis} conditions. As a result, we can use our knowledge of \(\Ker(\CoindArr{f\and 1_X})_{\PXMod}\) to describe \(\Ker(\CoindArr{f\and 1_X})_{\XMod}\) (see \cref{subsubsection KLE in XMod}). However, even if the power object in \(\Cat\) is better known than the one in \(\RG\) (see \cref{lem:exp PXMod}), in \cref{sec - chi problem with XMod}, we point out a problem that we cannot solve: since the underlying group in the kernel of \(\CoindArr{f\and 1_X}\) in \(\XMod\) (see \cref{thm - kernel in XMod final version}) is a quotient of the kernel in \(\Grp\) and \(\PXMod\), the usual definition of the morphism \(\chi\) used in \(\Grp\) and in \(\PXMod\) does not define a morphism in \(\XMod\), which stops us from continuing the construction of the Kaluzhnin--Krasner embedding as in those two previous cases.

\section{Internal precrossed modules}
\label{subsec - definitions}
The notion of \emph{crossed modules} was first introduced by Whitehead in~\cite{W49} to study certain properties of the boundary map between the first two homotopy groups of a given pair of pointed topological spaces. The main subjects of interest in the present article are \emph{precrossed modules} and their category \(\PXMod\), as well as its subcategory \(\XMod\) of crossed modules determined by the so-called \emph{Peiffer condition} (see below).

In~\cite{Jan03}, G.~Janelidze introduced the definitions of \defn{internal precrossed modules} and \defn{internal crossed modules} in the semi-abelian context to extend these notions to any semi-abelian category~\cite{JMT02}, the classical ones corresponding to the ones internal to the category \(\Grp\) of groups.

\begin{definitions}\label{def - (pre)crossed module}
	Let \(\C\) be a semi-abelian category and consider two objects \(A\) and \(B\) in \(\C\). We write \(B\flat A\coloneq \Ker(\CoindArr{1_B \and 0}\colon B+A\to B)\) and denote its inclusion in \(B+A\) as \(\kappa_{B,A}\coloneq \ker(\CoindArr{1_B\and 0})\).

	An \defn{internal precrossed module} in \(\C\) is given by a couple of objects \(A\) and \(B\), a morphism \(\partial \colon A\to B\) and an internal action \(\xi\colon B\flat A\to A\) (in the sense of~\cite{BJK05}) satisfying the \emph{Precrossed module condition \PCM{}}, i.e.\ such that the square
	\[
		\tag*{\PCM{}}
		\begin{tikzcd}
			B\flat A\arrow[d, "\xi" '] \arrow[r, "\kappa_{B,A}"] & B+A\arrow[d, "\CoindArr{1_B\and \partial}"] \\
			A\arrow[r, "\partial" ']                             & B
		\end{tikzcd}
	\]
	commutes.

	An internal precrossed module \((A,B,\xi,\partial)\) is an \defn{internal crossed module} whenever the \emph{Peiffer condition \PFF{}} is verified, i.e.\ when the square
	\[
		\tag*{\PFF{}}
		\begin{tikzcd}[column sep=large]
			(B+A)\flat A \arrow[r,"\CoindArr{1_B\and \partial}\flat 1_A"] \arrow[d, "\underline{\CoindArr{1_B\and \iota_2}}" '] & B\flat A\arrow[d,"\xi"] \\
			B\flat A \arrow[r,"\xi" ']                                                                                          & A
		\end{tikzcd}
	\]
	commutes, where \(\underline{\CoindArr{1_B\and \iota_2}}\) is the unique morphism such that \(\kappa_{B,A}\underline{\CoindArr{1_B\and \iota_2}}=\CoindArr{1_B\and \iota_2} \kappa_{B+A, A}\)  and \(\iota_2\) is the second inclusion of the binary coproduct \(B+A\).

	A \defn{morphism of internal (pre)crossed modules} \((f_A,f_B)\colon (A,B,\xi,\partial)\to (A',B',\xi',\partial')\) is a couple of morphisms \(f_A\colon A\to A'\) and \(f_B\colon B\to B'\) in \(\C\) such that \(f_B\partial = \partial' f_A\) and \(f_A \xi = \xi ' (f_B\flat f_A)\).

	The above data describe the \defn{category of internal precrossed modules in \(\C\)} and the \defn{category of internal crossed modules in \(\C\)}, denoted respectively by \(\PXMod(\C)\) and \(\XMod(\C)\).
\end{definitions}

These definitions are not always easy to manipulate but, fortunately, there is an alternative point of view we may adopt depending on the situation. The category \(\PXMod(\C)\) is equivalent to the category \(\RG(\C)\)\label{def - usual def RG} of internal reflexive graphs in \(\C\): an \defn{internal reflexive graph} in \(\C\) is a diagram \(\begin{tikzcd}[cramped] G_1 \arrow[r, "d", shift left=2] \arrow[r, "c"', shift right=2] & G_0 \arrow[l, "\iota" description]\end{tikzcd}\) where \(G_1\) and \(G_0\) are objects in \(\C\) and \(d\), \(c\), \(\iota\) are morphisms of \(\C\) such that \(d\iota=1_{G_0}=c\iota\). We write \((G_0,G_1, d,c, \iota)\) to design this reflexive graph. A \defn{morphism of internal reflexive graphs} \((f_0,f_1)\colon (G_0,G_1, d,c, \iota)\to (G_0',G_1', d',c', \iota')\) is a couple of morphisms \(f_0\colon G_0\to G_0'\), \(f_1\colon G_1\to G_1'\) of \(\C\) such that all the squares in
\[
	\begin{tikzcd}
		G_1 \arrow[r, "d", shift left=2.5] \arrow[r, "c"', shift right=2.5] \arrow[d, "f_1"'] & G_0 \arrow[l, "\iota" description] \arrow[d, "f_0"] \\
		G_1' \arrow[r, "d'", shift left=2.5] \arrow[r, "c'"', shift right=2.5]                & G_0' \arrow[l, "\iota'" description]
	\end{tikzcd}
\]
commute.

If \(\C\) is a variety of \(\Omega\)-groups\footnote{An \(\Omega\)-group is a group \(G\) equipped with a set \(\Omega\) of additional operations of positive arity such that, for all \(\omega\in \Omega\), \(\omega(e,\dots, e)=e\) where \(e\) is the neutral element of \(G\). A morphism of \(\Omega\)-groups is a group morphism \(f\from G\to G'\) compatible with every \(\omega\in \Omega\): \(f(\omega(g_1,\dots, g_n))=\omega(f(g_1),\dots, f(g_n))\).}~\cite{Hig56}, reflexive graphs may also be viewed as a variety of \(\Omega\)-groups, which will be the primary point of view we will adopt in this paper. In the following, we will freely use the notation \(\PXMod(\C)\) to talk about precrossed modules in the original sense, the reflexive graph one or the \(\Omega\)-group one. Likewise, \(\XMod(\C)\) denotes the crossed module subcategory of \(\PXMod(\C)\) in any of these three points of view.

\begin{lemma}[see proof of {\cite[Proposition~2.3]{GR04}}]
	\label{lemma - RG(V) variety}
	Let \(\C\) be a regular category. Then the category \(\RG(\C)\) of reflexive graphs in \(\C\) is equivalent to the following category:
	\begin{itemize}
		\item its objects are triplets \((X,s,t)\) where \(s\), \(t\from X\to X\) are endomorphisms of \(X\) such that \(ts=s\) and \(st=t\),
		\item an arrow from \((X,s,t)\) to \((X',s',t')\) is an arrow \(f\from X\to X'\) in \(\C\) satisfying \(fs=s'f\) and \(ft=t'f\).
	\end{itemize}
\end{lemma}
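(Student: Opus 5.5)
The plan is to write down explicit functors in both directions and check that they form an equivalence. The only place the hypothesis on \(\C\) enters is when we recover the object of vertices from an idempotent. For this we use (regular epi, mono) factorisations, which exist in any regular category.

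\emph{From graphs to triples.} Given \((G_0,G_1,d,c,\iota)\), put \(X\coloneq G_1\), \(s\coloneq\iota d\) and \(t\coloneq\iota c\). Using \(d\iota=1_{G_0}=c\iota\) we get
\[
ts=\iota c\iota d=\iota d=s
\quad\text{and}\quad
st=\iota d\iota c=\iota c=t .
\]
A morphism \((f_0,f_1)\) is sent to \(f_1\). Then \(f_1\iota d=\iota' f_0 d=\iota' d' f_1\), so \(f_1s=s'f_1\), and similarly \(f_1t=t'f_1\).

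\emph{From triples to graphs.} First note that \(s\) and \(t\) are idempotent: \(ss=s(ts)=(st)s=ts=s\), and likewise \(tt=t(st)=(ts)t=st=t\).

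1. Factor \(s=me\) with \(e\) a regular epimorphism and \(m\) a monomorphism. From \(mem e=me\), since \(m\) is mono and \(e\) is epi, we get \(em=1\).
2. Set \(G_0\coloneq \Img(s)\), \(\iota\coloneq m\) and \(d\coloneq e\). Then \(d\iota=1_{G_0}\).
3. To define \(c\), observe that \(t=st=m(et)\) factors through \(m\). Put \(c\coloneq et\), so that \(\iota c=t\).
4. It remains to check \(c\iota=1_{G_0}\). From \(ts=s\) we get \(tme=me\), hence \(tm=m\) because \(e\) is epi. Then \(\iota c\iota=tm=m=\iota\), and \(\iota\) mono gives \(c\iota=1_{G_0}\).

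For a morphism \(f\) with \(fs=s'f\) and \(ft=t'f\), define \(f_0\coloneq d'f\iota\).

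1. First, \(\iota' f_0=s'f\iota=fs\iota=f\iota\), since \(s\iota=\iota d\iota=\iota\).
2. Hence \(\iota' f_0 d=f\iota d=fs=s'f=\iota' d' f\), and cancelling the mono \(\iota'\) gives \(f_0d=d'f\).
3. The same argument with \(t\) gives \(f_0c=c'f\).
4. Finally \(\iota'f_0=f\iota\) was shown in step 1.

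Functoriality follows from the same cancellation of monos.

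\emph{Equivalence.} Going from triples to graphs and back returns exactly \((X,s,t)\), because \(\iota d=me=s\) and \(\iota c=t\). Going from graphs to triples and back, the image of \(s=\iota d\) is \(\iota\) itself, since \(d\) is a split, hence regular, epi and \(\iota\) is a split mono. So there is a canonical isomorphism between \(G_0\) and the chosen image object. Transporting \(d\), \(c\) and \(\iota\) along it gives back the original graph, naturally in morphisms.

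The main point needing care is the recovery of \(G_0\), that is, splitting the idempotent \(s\) and showing that \(t\) has the same image. This is where regularity is used, together with the identities \(ts=s\) and \(st=t\). Everything else is routine mono and epi cancellation.
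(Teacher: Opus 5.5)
Your proof is correct and takes the same route as the paper: you send a graph to \((G_1,\iota d,\iota c)\), and in the other direction you recover \(G_0\), \(\iota\) and \(d\) from the (regular epi, mono) factorisation of \(s\). The one difference is that you define \(c\) directly as \(et\) and check \(c\iota=1\) by cancellation, instead of factorising \(t\) separately and identifying \(\Img(t)\) with \(\Img(s)\) as the paper does; you also write out the action on morphisms and the two natural isomorphisms, which the paper leaves implicit.
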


\begin{proof}
	Let us consider an internal reflexive graph \(\begin{tikzcd}[cramped]
		G_1 \arrow[r, "d", shift left=2] \arrow[r, "c"', shift right=2] & G_0 \arrow[l, "\iota" description]
	\end{tikzcd}\) in \(\C\). By setting \(s\eqdef \iota d\from G_1\to G_1\) and \(t\eqdef \iota c\from G_1 \to G_1\), then the equations
	\begin{equation}
		\label{eq - s and t}
		st=t \qquad\text{and}\qquad ts=s
	\end{equation}
	hold.

	For the converse, it suffices to consider the factorisation images of \(s\) and \(t\). By the equations \eqref{eq - s and t}, we can prove that \(\Img(s)\) is isomorphic to \(\Img(t)\) and the monic parts of the factorisation are the same up to isomorphism. Therefore, it suffices to take this monomorphism as the morphism \(\iota\) and the two regular epimorphisms as \(d\) and \(c\).
\end{proof}

\begin{remark}
	\label{rem:s and t idempotent}
	From the equations~\eqref{eq - s and t}, we also have that \(s\) and \(t\) are idempotent:
	\[
		ss=s(ts)=ts=s \qquad\text{and}\qquad tt=t(st)=st=t\text{.}
	\]
\end{remark}

\begin{remark}[Expression of \cref{lemma - RG(V) variety} in groups]
	Let us recall that a precrossed module in the sense of \cref{def - (pre)crossed module} is nothing else than two groups \(A\) and \(B\), a morphism \(\partial\colon A\to B\) and a group action of \(B\) on \(A\) (denoted \(a^b\)) satisfying
	\[
		\partial\left(a^b\right)=b\partial(a)b^{-1}
	\]
	for all \(a\in A\) and \(b\in B\). The group action determines the split extension
	\[
		\begin{tikzcd}[cramped]
			0 \arrow[r] & A \arrow[r, "k", nmono] & A\rtimes B \arrow[r, "d", repi, shift left] & B \arrow[l, "\iota", mono, shift left] \arrow[r] & 0
		\end{tikzcd}
	\]
	where \(A\rtimes B\) is the semi-direct product---the set \(A\times B\) equipped with the multiplication \((a,b)\cdot (a',b')=(a\cdot a'^b,bb')\)---while \(k(a)=(a,e_B)\), \(\iota(b)=(e_A,b)\) and \(d(a,b)=b\) for \(a\in A\) and \(b\in B\).

	By the above equation, the right-hand side of the split extension turns out to be a reflexive graph where \(c\colon A\rtimes B\to B\) is defined as \(c(a,b) \eqdef \partial(a)b\).

	As a consequence, we can apply the proof of \cref{lemma - RG(V) variety} to express the precrossed module as the group \(A\rtimes B\) with \(s(a,b) \eqdef (e_A,b)\) and \(t(a,b) \eqdef (e_A,\partial (a)b)\).
\end{remark}

From the previous lemma, we can deduce that \(\PXMod(\Grp)\) (or shortly \(\PXMod\)) is a variety of \(\Omega\)-groups or, even more precisely, of \defn{distributive} \(\Omega\)-groups~\cite{Hig56}, i.e.\ of \(\Omega\)-groups such that every \(\omega\in \Omega\) is distributive with respect to the group multiplication \(\star\): for all \(g_1\), \dots, \(g_n\), \(h\in G\) and for all \(i \in \{1, \dots, n\}\),
\begin{equation}
	\label{eq - distributive omega group}
	\omega(g_1,\dots , g_i\star h,\dots, g_n)=\omega(g_1,\dots, g_i,\dots, g_n)\star \omega(g_1,\dots, h,\dots, g_n)\text{.}
\end{equation}
In particular, if \(\omega\in \Omega\) is a unary operation, like \(s\) and \(t\) in our case, then this condition~\eqref{eq - distributive omega group} is equivalent to requiring that \(\omega\) is a group morphism.

\begin{corollary}
	\label{cor:PXMod(V) omega distributive if V is so}
	For a variety \(\V\) of \(\Omega\)-groups, \(\PXMod(\V)\) is also a variety of \(\Omega\)-groups. If in addition, \(\V\) is distributive, then so is \(\PXMod(\V)\).
\end{corollary}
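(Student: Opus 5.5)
We use \cref{lemma - RG(V) variety} to identify \(\PXMod(\V)\) with \(\RG(\V)\), and then with the category of triples \((X,s,t)\). This applies because a variety of \(\Omega\)-groups is a regular category. Moreover, since \(\V\) is semi-abelian, the equivalence \(\PXMod(\V)\simeq\RG(\V)\) recalled after \cref{def - (pre)crossed module} is available.

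It then suffices to show that the category of triples \((X,s,t)\) is a variety of \(\Omega\)-groups. If \(\V\) has signature \(\Omega\), we take the signature \(\Omega'\eqdef\Omega\sqcup\{s,t\}\), where \(s\) and \(t\) are unary. The identities of \(\Omega'\) are:
\begin{itemize}
	\item all identities of \(\V\);
	\item the equations \(s(\omega(x_1,\dots,x_n))=\omega(s(x_1),\dots,s(x_n))\) for each \(\omega\in\Omega\) and for the group operations (multiplication, inverse, unit), and likewise for \(t\);
	\item the equations \(t(s(x))=s(x)\) and \(s(t(x))=t(x)\).
\end{itemize}
The second family says exactly that \(s\) and \(t\) are endomorphisms in \(\V\). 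The third family is \eqref{eq - s and t}.

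Every identity is an equation between terms, so this is a variety. Its morphisms are the maps preserving all operations of \(\Omega'\). These are precisely the \(\V\)-morphisms \(f\) with \(fs=s'f\) and \(ft=t'f\), which matches \cref{lemma - RG(V) variety}. The \(\Omega\)-group condition holds: the group operations are still present, and \(s(e)=e\), \(t(e)=e\) follow from \(s\) and \(t\) being group morphisms.

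For distributivity, every \(\omega\in\Omega\) is already distributive by hypothesis. For the new operations \(s\) and \(t\), the remark after \eqref{eq - distributive omega group} shows that a unary operation is distributive exactly when it is a group morphism. This is part of the imposed identities.

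The only delicate point is verifying that \cref{lemma - RG(V) variety}, stated for a regular category, really yields an equivalence landing in this variety. This amounts to checking that the image factorisations used in its proof exist in \(\V\), which they do since varieties are regular.
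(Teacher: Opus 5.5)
Your argument is correct and follows the same route as the paper. You pass through \(\PXMod(\V)\Equiv\RG(\V)\) and the description of reflexive graphs as triples \((X,s,t)\), adjoin two unary operations \(s\), \(t\) to the signature (commuting with all operations of \(\V\) and satisfying \(ts=s\), \(st=t\)), and keep distributivity because \(s\) and \(t\) are group morphisms; you simply write out the equational presentation more explicitly than the paper does. The one step you assert without justification is that \(\V\) is semi-abelian, which the paper gets by noting that the group operations make any variety of \(\Omega\)-groups protomodular (and it is pointed, \(e\) being its only constant).
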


\begin{proof}
	The first statement about \(\Omega\)-groups follows from the equivalence \(\PXMod(\V) \Equiv \RG(\V)\) since any variety \(\V\) of \(\Omega\)-groups is a semi-abelian variety: using the underlying group structure, we can prove that it is a protomodular variety~\cite{BJ2003} and thus a semi-abelian category~\cite{JMT02}. Therefore, \cref{lemma - RG(V) variety} tells us that taking precrossed modules is the same as simply adding two endomorphisms \(s\), \(t\) to the \(\Omega\)-group operations (\(s\) and \(t\) are compatible as wanted since they are morphisms in \(\V\)). Finally, if we start with distributive \(\Omega\)-groups, we keep the distributivity since the new (unary) operations \(s\) and \(t\) are in particular group morphisms.
\end{proof}

Now, here is a description of coproducts of internal precrossed modules which will be useful later.

\begin{lemma}
	\label{lemma:adding unary oper}
	Let \(\C\) be a category and \(J\) be a set. Consider the following category \(\C_{\textnormal{End}}\):
	\begin{itemize}
		\item its objects are couples \(\bigl(C , (t_j)_{j \in J}\bigr)\) where \(C\) is an object of \(\C\) and \(t_j\), \(j \in J\), are endomorphisms of \(C\) satisfying a certain set of equations involving only compositions of these \(t_j\)'s,
		\item an arrow from \(\bigl(C , (t_j)_{j \in J}\bigr)\) to \(\bigl(C' , (t'_j)_{j \in J}\bigr)\) in \(\C_{\textnormal{End}}\) is an arrow \(f \from C \to C'\) in \(\C\) such that \(f t_j = t'_j f\) for every \(j \in J\).
	\end{itemize}
	If we have two objects \(\bigl(A , (a_j)_{j \in J}\bigr)\), \(\bigl(B , (b_j)_{j \in J}\bigr)\) of \(\C_{\textnormal{End}}\) that have a binary product \(\bigl(A \times B , p_1 , p_2\bigr)\) in \(\C\), then their binary product in \(\C_{\textnormal{End}}\) exists and is given by the object \(\bigl(A \times B , (a_j \times b_j)_{j \in J}\bigr)\) with the same projections.
\end{lemma}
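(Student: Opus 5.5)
We first check that \(\bigl(A \times B , (a_j \times b_j)_{j \in J}\bigr)\) is an object of \(\C_{\textnormal{End}}\), and then verify the universal property of the product directly. Here \(a_j \times b_j \from A\times B\to A\times B\) denotes the unique arrow with \(p_1 (a_j\times b_j) = a_j p_1\) and \(p_2(a_j\times b_j) = b_j p_2\). These two equations say precisely that \(p_1\) and \(p_2\) commute with the endomorphisms, so \(p_1\) and \(p_2\) are arrows of \(\C_{\textnormal{End}}\) once the candidate is known to be an object.

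For the equations, consider an equation \(t_{j_1}\cdots t_{j_n} = t_{k_1}\cdots t_{k_m}\) required of objects of \(\C_{\textnormal{End}}\). We compose each side, built from the \(a_j\times b_j\), with \(p_1\). Repeatedly applying \(p_1(a_j\times b_j)=a_jp_1\) gives \(p_1 (a_{j_1}\times b_{j_1})\cdots(a_{j_n}\times b_{j_n}) = a_{j_1}\cdots a_{j_n}\,p_1\), and similarly for the other side and for \(p_2\). Since \(A\) and \(B\) satisfy the equation, both sides agree after composing with \(p_1\) and with \(p_2\). The joint monicity of the product projections then gives the equation on \(A\times B\). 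An identity-type equation (where one side is empty, meaning \(1\)) is handled by the same argument. The only care needed is this bookkeeping by induction on word length; I expect no real obstacle.

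For the universal property, let \(\bigl(C,(c_j)_{j \in J}\bigr)\) be an object with arrows \(f\from C\to A\) and \(g\from C\to B\) in \(\C_{\textnormal{End}}\). Let \(\langle f,g\rangle\from C\to A\times B\) be the unique arrow given by the product in \(\C\). We need \(\langle f,g\rangle c_j = (a_j\times b_j)\langle f,g\rangle\), and we again check it after the projections. Composing with \(p_1\), the left side gives \(fc_j\) and the right side gives \(a_jf\), and these agree because \(f\) is an arrow of \(\C_{\textnormal{End}}\); composing with \(p_2\) works the same way with \(g\). So \(\langle f,g\rangle\) is an arrow of \(\C_{\textnormal{End}}\). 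Uniqueness is inherited from \(\C\), because the forgetful functor \(\C_{\textnormal{End}}\to\C\) is faithful: any factorisation in \(\C_{\textnormal{End}}\) is in particular one in \(\C\).
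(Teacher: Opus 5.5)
Your proof is correct and follows the paper's route. The paper checks the equations on \(A\times B\) via the componentwise rule \((f\times g)(f'\times g')=(ff')\times(gg')\), which is what your word-length induction with joint monicity of \(p_1,p_2\) establishes, and it then shows the induced arrow commutes with the endomorphisms by postcomposing with the projections, exactly as you do; your explicit remark that uniqueness is inherited through the faithful forgetful functor to \(\C\) spells out a point the paper leaves implicit.
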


\begin{proof}

	First, we check that \(\bigl(A \times B , (a_j \times b_j)_{j \in J}\bigr)\) is an object of \(\C_{\textnormal{End}}\) since composition is done componentwise (i.e.\ \((f \times g)  (f' \times g') = (f  f') \times (g  g')\)). For the projections \(p_1 \from A \times B \to A\) and \(p_2 \from A \times B \to B\), we compute \(p_1  (a_j \times b_j) = a_j  p_1\) for all \(j \in J\) as wanted and similarly for \(p_2\) so they are indeed arrows in \(\C_{\textnormal{End}}\).

	Finally, given another span \begin{tikzcd}[cramped]
		\bigl(A , (a_j)_{j \in J}\bigr) & \bigl(C , (c_j)_{j \in J}\bigr) \arrow[l, "f"'] \arrow[r, "g"] & \bigl(B , (b_j)_{j \in J}\bigr)
	\end{tikzcd}, the universal property of \(A \times B\) in \(\C\) gives us a unique arrow \(\IndArr{f \and g} \from C \to A \times B\) making
	\[
		\begin{tikzcd}
			& C \arrow[ld, "f"'] \arrow[d, "\IndArr{f \and g}" description, induced] \arrow[rd, "g"]     \\
			A & A \times B \arrow[l, "p_1"] \arrow[r, "p_2"']                                          & B
		\end{tikzcd}
	\]
	commute and we must verify that \(\IndArr{f \and g}\) is an arrow in \(\C_{\textnormal{End}}\) to finish the proof, which is done by postcomposing \(p_1\) and \(p_2\) to the equations to verify. For example, we have
	\[
		p_1  \bigl(\IndArr{f \and g}  c_j\bigr) = f  c_j
	\]
	and
	\[
		p_1  \bigl((a_j \times b_j)  \IndArr{f \and g}\bigr) = (a_j  p_1)  \IndArr{f \and g} = a_j  f
	\]
	which are equal since \(f\) is an arrow in \(\C_{\textnormal{End}}\).
\end{proof}

\begin{remark}
	In this article, we only focus on binary coproducts (so the dual of the above statement). However, the above result and the associated proof may be extended from the binary case to products of arbitrary size.
\end{remark}

\begin{corollary}
	\label{cor: coproducts in PXMod}
	In a semi-abelian category \(\C\), the binary coproducts of internal precrossed modules are built in \(\C\) with their endomorphisms being the induced morphisms.\noproof
\end{corollary}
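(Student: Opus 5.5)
The plan is to obtain the corollary by dualising \cref{lemma:adding unary oper} and combining it with \cref{lemma - RG(V) variety}. Since \(\C\) is semi-abelian, it is in particular regular. So \cref{lemma - RG(V) variety}, together with the equivalence \(\PXMod(\C)\Equiv\RG(\C)\), identifies \(\PXMod(\C)\) with the category \(\C_{\textnormal{End}}\) for \(J=\{s,t\}\) and the equations \(ts=s\), \(st=t\). These equations involve only composites of the endomorphisms, so the hypotheses of \cref{lemma:adding unary oper} are satisfied. Coproducts are preserved by equivalences of categories, so it suffices to compute binary coproducts in this \(\C_{\textnormal{End}}\).

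Next, I apply \cref{lemma:adding unary oper} to \(\opp{\C}\). An endomorphism of \(C\) in \(\C\) is an endomorphism of \(C\) in \(\opp{\C}\). The defining equations, such as \(ts=s\), become \(st=s\) after reversing composition order, but they remain equations involving only composites of the \(t_j\)'s. Likewise, the condition \(ft_j=t'_jf\) becomes the same condition for the opposite arrow. Hence \((\C_{\textnormal{End}})^{\operatorfont op}\) is exactly \((\opp{\C})_{\textnormal{End}}\) for the dualised set of equations, and a binary product there is a binary coproduct in \(\C_{\textnormal{End}}\).

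A semi-abelian category has binary coproducts. The lemma therefore gives that the coproduct of \((A,s_A,t_A)\) and \((B,s_B,t_B)\) is \(A+B\) with the same injections \(\iota_1,\iota_2\). Its endomorphisms are \(s_A+s_B=\CoindArr{\iota_1 s_A\and \iota_2 s_B}\) and \(t_A+t_B=\CoindArr{\iota_1 t_A\and \iota_2 t_B}\), which are exactly the morphisms induced by the universal property. This is the claimed statement.

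The only point needing care is the translation from the \((X,s,t)\) description back to the original description of precrossed modules (or reflexive graphs). I would read the corollary in the \((X,s,t)\) sense, as the paper explicitly allows, so that the statement is literally the dual lemma. If a reflexive-graph reading is wanted, one recovers \(G_0\) as the common image of \(s\) and \(t\) on \(A+B\), as in the proof of \cref{lemma - RG(V) variety}. No real obstacle is expected: the dualisation step is purely formal.
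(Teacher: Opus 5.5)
Your proposal is correct and follows the paper's intended argument. The corollary is stated without proof right after \cref{lemma:adding unary oper}, and the remark between them says it is the dual statement, to be applied to the description of \(\PXMod(\C)\Equiv\RG(\C)\) from \cref{lemma - RG(V) variety} as objects with two endomorphisms satisfying \(ts=s\) and \(st=t\). Your explicit check that \((\C_{\textnormal{End}})^{\operatorfont op}\) is \((\opp{\C})_{\textnormal{End}}\) for the reversed equations, and your recovery of the reflexive-graph form (which agrees with \cref{rem - binary coproducts usual definition}), spell out details the paper leaves implicit.
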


Therefore, the binary coproduct of two precrossed modules \((G_1, s_1, t_1)\) and \((G_2, s_2, t_2)\) is given by the object \((G_1+_{\C} G_2, s_1+_{\C} s_2, t_1+_{\C} t_2)\) with its two inclusions being the ones from the binary coproduct in \(\C\). To simplify the notation, we will often omit the subscript \(\C\) if no confusion is possible: we write \((G_1+ G_2, s_1+ s_2, t_1+ t_2)\).

\begin{remark}
	\label{rem - binary coproducts usual definition}
	If we consider the classical approach (which will be used in \cref{subsec - kernels in PXMod and XMod}) as in p.~\pageref{def - usual def RG}, then we can reformulate the previous results as: given two internal precrossed modules \((X_0	,X_1, d^X, c^X, \iota^X)\) and \((G_0,G_1,d^G,c^G,\iota^G)\) in a semi-abelian category, their binary coproduct is given by
	\[
		(G_0+X_0, G_1+X_1, d^X+d^G, c^X+c^G, \iota^X+\iota^G)
	\]
	where the objects and the morphisms are induced by the binary coproducts in the underlying category.
\end{remark}

\section{A first step toward embedding theorems: computation of kernels}\label{sec - embedding theorem}
\subsection{Description of the kernel of \texorpdfstring{\(\CoindArr{f \and 1_X}\)}{⟨f,1\_X⟩} in groups}\label{subsect - Ker in Grp}
In~\cite{CRVdL}, the authors described the kernel of \(\CoindArr{1_X \and 1_X}\) in groups for a given group \(X\) as the subgroup of the free group \(X+X\) generated by elements of the shape
\[
	\overline{x}\underline{x}^{-1} \quad\text{and}\quad \underline{y}\overline{y}^{-1}
\]
for \(x\), \(y\in X\), where we write \(\overline{(\placeholder)}\) and \(\underline{(\placeholder)}\) for the elements which belong, respectively, to the first or second copy of \(X\) inside the coproduct \(X+X\).

However, in~\cite{DGMVdL}, the authors enlarge the previous case where they prove that, in groups, the kernel of \(\CoindArr{f\and 1_X}\), for a group morphism \(f\from G\to X\), is the group admitting the presentation \(\GenRel{S}{R}\) where \(S=G\times X\) and
\begin{equation}
	\label{eq: relation for kernel in group}
	R = \{
	(e_G,x) = e \mid x\in X\} \cup \{(g,x)(g',xf(g))=(gg',x)\mid g,g'\in G, x\in X\}
	\text{.}
\end{equation}

The idea of this presentation is to see an element \((g,x)\) as the word \(xgf(g)^{-1}x^{-1}\) in the free product \(G+X\). This idea helps to compare the two expressions.

\begin{remark}
	\label{rem: the kernel described for group is a normal subgroup}
	Of course, the presentation \(\GenRel{S}{R}\) is not a proper subgroup of \(G+X\) but it is isomorphic to the kernel. As a kernel, we already know that the image of this presentation in \(G+X\) is a normal subgroup but we can also check it by hand. This image is the subgroup of \(G+X\) generated by the elements of the form \(xgf(g)^{-1}x^{-1}\), let us check its normality. Given \(y\in X\) and \(h\in G\), we have
	\[
		\begin{split}
			yxgf(g)^{-1}x^{-1}y^{-1} & =(yx)gf(g)^{-1}(yx)^{-1}\text{;}                                                     \\
			hxgf(g)^{-1}x^{-1}h^{-1} & =(hf(h)^{-1})\bigl((f(h)x)gf(g)^{-1}(f(h)x)^{-1}\bigr)\bigl(f(h)h^{-1}\bigr)\text{.}
		\end{split}
	\]
\end{remark}

The two descriptions differ in the number of factors needed to write down a generator. In the second one, the generator attached to a couple \((g,x)\in G\times X\) is the word \(xgf(g)^{-1}x^{-1}\), that is, the conjugate by \(x\in X\) of the element \(gf(g)^{-1}\); it is a product of the four factors \(x\), \(g\), \(f(g)^{-1}\) and \(x^{-1}\). By contrast, the generators \(\overline{x}\underline{x}^{-1}\) and \(\underline{y}\overline{y}^{-1}\) of the first description are products of only two factors. This is no coincidence: the first description is the special case \(G=X\) and \(f=1_X\) of the second, and there the four factors collapse into two. Indeed, \(f=1_X\) is then surjective, so given \(x\in X\) and \(g\in G\) we may choose \(g'\in G\) with \(f(g')=x\) and obtain
\[
	xgf(g)^{-1}x^{-1}=\bigl(x(g')^{-1}\bigr)\bigl(g'gf(g)^{-1}f(g')^{-1}\bigr)= \bigl(f(g')(g')^{-1}\bigr)\bigl(g'gf(g'g)^{-1}\bigr)\text{,}
\]
where the first bracket is of the shape \(\underline{y}\overline{y}^{-1}\) and the second of the shape \(\overline{x}\underline{x}^{-1}\), so that each is a generator of the first description.

\subsection{Description in \texorpdfstring{\(\PXMod\)}{PXMod}}
\label{subsec - kernels in PXMod and XMod}
The computation of the kernel of the induced morphism \(\CoindArr{f\and 1_X}\) will be straightforward for \(\PXMod\) since on the one hand, we already know what the kernel is for groups and, on the other hand, they are both distributive \(\Omega\)-groups (see \cref{cor:PXMod(V) omega distributive if V is so}).

\begin{lemma}
	\label{lemma: kernel in PXMod}
	Let \((G,s_G,t_G)\) and \((X,s_X,t_X)\) be two precrossed modules, and consider their binary coproduct \((G+X,s_G+s_X,t_G+t_X)\) (see \cref{cor: coproducts in PXMod}). For a morphism \(f\colon G\to X\) of precrossed modules, the kernel in \(\PXMod\) of \(\CoindArr{f\and 1_X}\) is the precrossed module \((\Ker(\CoindArr{f\and 1_X})_\Grp, s_{\PXMod}, t_{\PXMod})\) where \(\Ker(\CoindArr{f\and 1_X})_\Grp\) denotes the kernel as computed in groups and where \(s_{\PXMod}\) and \(t_{\PXMod}\) denote, respectively, the restriction to \(\Ker(\CoindArr{f\and 1_X})_\Grp\) of the operations \(s_G+s_X\) and \(t_G+t_X\).
\end{lemma}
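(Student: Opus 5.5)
We view \(\PXMod\) as the category \(\Grp_{\textnormal{End}}\) of groups equipped with two endomorphisms \(s\), \(t\) satisfying \(ts=s\) and \(st=t\), using \cref{lemma - RG(V) variety}. The plan is to show that kernels in this category are computed as in \(\Grp\) with the restricted endomorphisms, and then apply this to the arrow \(\CoindArr{f\and 1_X}\). We first check that \(\CoindArr{f\and 1_X}\) is a morphism in \(\PXMod\); this holds by \cref{cor: coproducts in PXMod}. Postcomposing with the coproduct inclusions gives
\[
\CoindArr{f\and 1_X}\comp(s_G+s_X)\comp\iota_1=f s_G=s_X f=s_X\comp\CoindArr{f\and 1_X}\comp\iota_1 ,
\]
and similarly on the second summand and for \(t\). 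The universal property of \(G+X\) then yields the equality.

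Next, write \(k\colon K\to G+X\) for the kernel of \(\CoindArr{f\and 1_X}\) in \(\Grp\). We show that \(s_G+s_X\) and \(t_G+t_X\) restrict to \(K\). If \(w\in K\), then
\[
\CoindArr{f\and 1_X}\bigl((s_G+s_X)(w)\bigr)=s_X\bigl(\CoindArr{f\and 1_X}(w)\bigr)=s_X(e)=e ,
\]
so \((s_G+s_X)(w)\in K\), and likewise for \(t\). This gives group endomorphisms \(s_{\PXMod}\) and \(t_{\PXMod}\) of \(K\) with \(k\, s_{\PXMod}=(s_G+s_X)k\) and \(k\, t_{\PXMod}=(t_G+t_X)k\). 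The equations \(ts=s\) and \(st=t\) hold on \(K\) because they hold on \(G+X\) and \(k\) is injective. Hence \((K,s_{\PXMod},t_{\PXMod})\) is a precrossed module, and \(k\) is a morphism of precrossed modules with \(\CoindArr{f\and 1_X}k=0\).

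It remains to check the universal property. Let \(h\colon (C,s_C,t_C)\to (G+X,s_G+s_X,t_G+t_X)\) be a morphism of precrossed modules with \(\CoindArr{f\and 1_X}h=0\). In \(\Grp\) there is a unique \(u\colon C\to K\) with \(ku=h\). To see that \(u\) commutes with the endomorphisms, compose with the monomorphism \(k\):
\[
k\, s_{\PXMod}\, u=(s_G+s_X)h=h s_C=k u s_C .
\]
Cancelling \(k\) gives \(s_{\PXMod}u=us_C\), and likewise for \(t\). Uniqueness of \(u\) in \(\PXMod\) is inherited from \(\Grp\), since the forgetful functor is faithful.

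The only point needing care is the restriction step. It relies on \(\CoindArr{f\and 1_X}\) commuting with the endomorphisms, which is exactly where \(f\) being a morphism of precrossed modules is used. Everything else is formal, essentially the dual of the argument of \cref{lemma:adding unary oper} carried out for equalisers/kernels instead of products.
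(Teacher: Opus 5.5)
Your proof is correct and follows the same route as the paper, which simply asserts that the kernel in \(\PXMod\) is the group-theoretic kernel with the structure inherited from \((G+X,s_G+s_X,t_G+t_X)\). You supply the details the paper leaves implicit: that \(\CoindArr{f\and 1_X}\) commutes with the endomorphisms so that \(s_G+s_X\) and \(t_G+t_X\) restrict to the kernel, and the check of the universal property by cancelling the monomorphism \(k\). One wording slip: kernels are limits, like products, so your closing argument is the direct analogue of \cref{lemma:adding unary oper}, not its dual.
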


\begin{proof}
	Since \(\CoindArr{f\and 1_X}\) is the same morphism as in groups, its kernel in \(\PXMod\) has the same underlying set as the kernel in \(\Grp\) and its precrossed module structure is the one inherited from the coproduct \((X+G,s_X+s_G,t_X+t_G)\).
\end{proof}

Moreover, we have the following result about kernels of morphisms in a given variety of distributive \(\Omega\)-groups:
\begin{theorem}[\cite{Hig56}]
	\label{thm - Higgins kernel distributive omega groups}
	If \(G\) is a distributive \(\Omega\)-group and \(H\) is an \(\Omega\)-subgroup of \(G\), then \(H\) is a kernel if and only if \(H\) is a normal subgroup of \(G\) and, for all \(g_1,\dots, g_n\in G\), \(h\in H\) and \(\omega\in \Omega\), \(\omega(g_1,\dots, g_{i-1}, h, g_{i+1},\dots, g_n)\in H\).\noproof
\end{theorem}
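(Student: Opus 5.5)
The plan is to prove the two implications separately, working throughout with the congruence on \(G\) attached to a subgroup \(H\): \(g\sim g'\) if and only if \(g'^{-1}g\in H\). In a variety of \(\Omega\)-groups, kernels are exactly the classes of the neutral element for congruences. Since every congruence is determined by its \(e\)-class through the group structure, \(H\) is a kernel precisely when this relation \(\sim\) is a congruence of \(\Omega\)-groups. This uses that \(\Omega\)-groups form a semi-abelian, hence ideal-determined, variety, as recalled in the proof of \cref{cor:PXMod(V) omega distributive if V is so}.

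For the forward direction, suppose \(H=\Ker(\phi)\) for some morphism \(\phi\colon G\to G'\) of \(\Omega\)-groups. Then \(H\) is a normal subgroup, because \(\phi\) is in particular a group morphism. Now take \(h\in H\) and \(\omega\in\Omega\) of arity \(n\). Compatibility of \(\phi\) with \(\omega\) gives \(\phi(\omega(g_1,\dots,h,\dots,g_n))=\omega(\phi(g_1),\dots,e,\dots,\phi(g_n))\). By distributivity~\eqref{eq - distributive omega group} in the \(i\)-th variable, \(\omega(y_1,\dots,e,\dots,y_n)=\omega(y_1,\dots,e,\dots,y_n)\star\omega(y_1,\dots,e,\dots,y_n)\), so this element is \(e\). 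Hence \(\omega(g_1,\dots,h,\dots,g_n)\in H\).

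For the converse, assume \(H\) is normal and closed under the stated condition. Normality already makes \(\sim\) a group congruence, so it remains to show compatibility with each \(\omega\). Given \(g_j\sim g_j'\) for all \(j\), I will change one variable at a time. Write \(g_i=g_i'h\) with \(h\in H\). Distributivity in the \(i\)-th variable gives
\[
\omega(\dots,g_i'h,\dots)=\omega(\dots,g_i',\dots)\star\omega(\dots,h,\dots),
\]
where the other entries are mixed \(g_j\)'s and \(g_j'\)'s. The second factor lies in \(H\) by hypothesis, so the two values are congruent. Chaining these \(n\) steps by transitivity yields \(\omega(g_1,\dots,g_n)\sim\omega(g_1',\dots,g_n')\). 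The quotient \(G/H\) therefore inherits a well-defined \(\Omega\)-group structure satisfying the same identities, so it lies in the variety. The projection \(G\to G/H\) is a morphism in the variety with kernel \(H\).

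The main obstacle is making precise that kernels in the variety are exactly the \(e\)-classes of congruences and that the quotient remains in the variety. Both are standard universal algebra: quotients by congruences satisfy all equations of the variety. The step that genuinely needs care is the one-variable-at-a-time chaining, together with checking that distributivity puts the factor \(\omega(\dots,h,\dots)\) on the correct side (the right side, matching \(g'^{-1}g\in H\)). Normality of \(H\) makes the choice of side harmless in any case.
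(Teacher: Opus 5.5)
Your proof is correct. The paper gives no proof of this statement; it only cites Higgins. Your argument is the standard congruence argument, essentially Higgins's own: kernels are the \(e\)-classes of congruences, and the group structure forces such a congruence to be \(g\sim g'\iff g'^{-1}g\in H\). Distributivity then lets you check compatibility with each \(\omega\) one variable at a time, with the factor \(\omega(\dots,h,\dots)\) correctly placed on the right.

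One small point concerns the forward direction. There you use distributivity in the codomain \(G'\). This is justified, since \(G'\) lies in the ambient variety of distributive \(\Omega\)-groups, or since the relevant elements lie in the homomorphic image \(\phi(G)\). You can avoid any assumption on \(G'\) altogether: \(\omega(g_1,\dots,e,\dots,g_n)=e\) already holds in \(G\), so \(\phi(\omega(g_1,\dots,h,\dots,g_n))=\omega(\phi(g_1),\dots,\phi(e),\dots,\phi(g_n))=\phi(\omega(g_1,\dots,e,\dots,g_n))=e\).
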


In our setting, \(\Omega=\{s,t\}\) and therefore, the \(\Omega\)-operations in the binary coproducts of two reflexive graphs \((G,s_G,t_G)\) and \((X,s_X,t_X)\) are \(s_X+s_G\) and \(t_X+t_G\). If we combine this observation with the previous theorem, we obtain:

\begin{remark}[Normality via distributive \(\Omega\)-groups]
	Using the distributive \(\Omega\)-group structure of \(\PXMod\), we can also check by hand that the precrossed module \((\Ker(\CoindArr{f\and 1_X})_\Grp, s_{\PXMod}, t_{\PXMod})\) describes a kernel in \(\PXMod\).

	By \cref{lemma - RG(V) variety}, we know that any precrossed module in the usual sense can be viewed as a group with two unary operations. This proves that \(\PXMod\) is indeed a variety of distributive \(\Omega\)-groups (see \eqref{eq - distributive omega group}). Hence, we can apply \cref{thm - Higgins kernel distributive omega groups} to prove our claim: it suffices to prove that the generator of the group \(\Ker(\CoindArr{f\and 1_X})_\Grp\) is closed under \(s_{\PXMod}\) and \(t_{\PXMod}\). Since the proof is symmetric, we prove the closure under \(s_{\Ker(\CoindArr{f\and 1_X})}\): given \(x\in X\) and \(g\in G\), we have
	\[
		\begin{split}
			s_{\PXMod}\bigl(xgf(g)^{-1}x^{-1}\bigr) & =s_X(x)s_G(g)s_X(f(g))^{-1}s_X(x)^{-1} \\
			                                        & =s_X(x)s_G(g)f(s_G(g))^{-1}s_X(x)^{-1} \\
			                                        & = yhf(h)^{-1}y^{-1}
		\end{split}
	\]
	where \(s_X(x)=y\in X\) and \(s_G(g)=h\in G\) since \(s_X\) and \(s_G\) are endomorphisms of, respectively, \(X\) and \(G\).
\end{remark}

\section{Kaluzhnin--Krasner embedding theorems}

\subsection{Classical Kaluzhnin--Krasner embedding}\label{subsec - KK in groups}

As explained in~\cite{DGMVdL}, the category \(\XMod\) is a good candidate for a new example of a Kaluzhnin--Krasner embedding theorem. This is because it is a \emph{locally algebraically cartesian closed \LACC{}} category, a condition on the existence of a certain right adjoint.

It is important to point out that this assumption is quite strong. For instance, the only variety of non-associative algebras (over an infinite field of characteristic different from \(2\)) that is \LACC{} is the variety of Lie algebras~\cite{GVdL19,GVdL19b}. This is the reason why we restrict this article to \(\PXMod\) and \(\XMod\). Indeed, it is well known that \(\PXMod\) and \(\XMod\) are, respectively, equivalent to the category of groups internal to reflexive graphs\footnote{As recalled in~\cite{Jan03}, we can prove that \(\PXMod\) is equivalent to the category \(\RG(\Grp)\) of reflexive graphs internal to \(\Grp\), which itself turns out to be equivalent to the category \(\Grp(\RG)\) of groups internal to the category of reflexive graphs.} and internal to small categories~\cite{LR80,MLan98}. This implies that \(\PXMod\) and \(\XMod\) are \LACC{} since any category of internal groups in a cartesian closed category with pullbacks (such as \(\RG\) and \(\Cat\)~\cite{MLan98}) is \LACC{}~\cite[Proposition~5.3]{Gra12}.

The classical Kaluzhnin--Krasner embedding is a result about groups saying:
\begin{theorem}[\cite{KK51}]
	For any group extension
	\(\begin{tikzcd}[cramped, sep=small]
		0 \arrow[r] & A \arrow[r, nmono] & G \arrow[r, repi] & X \arrow[r] & 0
	\end{tikzcd}\),	the group \(G\) can be embedded into the wreath product \(A \wr X\) via a group homomorphism \(\phi_G\colon G\to A \wr X\).
\end{theorem}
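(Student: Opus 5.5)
We verify directly that the map \(\phi_G\colon G\to A\wr X\) from the introduction, \(g\mapsto (h_g,f(g))\) with \(h_g(x)=\theta(x)\,g\,\theta(x f(g))^{-1}\), is a well-defined injective group homomorphism. Here \(\theta\colon X\to G\) is a set-theoretical section of \(f\), and we identify \(A\) with its image \(k(A)=\Ker(f)\) in \(G\).

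The steps are as follows.

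\emph{Well-definedness.} Each value \(h_g(x)\) lies in \(A\), because
\[
f\bigl(\theta(x)\,g\,\theta(xf(g))^{-1}\bigr)=x\,f(g)\,f(g)^{-1}x^{-1}=e_X .
\]

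\emph{Multiplicativity.} In \(A\wr X=\Set(X,A)\rtimes X\) the product is \((h,x)(h',x')=(h\cdot h'^{x},\,xx')\), where \(h'^{x}(y)=h'(yx)\). So we must check that \(h_{gg'}(y)=h_g(y)\,h_{g'}(yf(g))\) for all \(y\in X\). This is a telescoping identity:
\[
\theta(y)\,g\,\theta(yf(g))^{-1}\cdot\theta(yf(g))\,g'\,\theta(yf(g)f(g'))^{-1}=\theta(y)\,gg'\,\theta(yf(gg'))^{-1}.
\]
The second coordinates multiply correctly because \(f\) is a homomorphism. This step is the only place where one must be careful: the convention for the action on \(\Set(X,A)\) must be exactly the one fixed in the introduction, namely a left action written as a superscript, and it must match the order of the factors in \(h_g\). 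I expect this bookkeeping of conventions to be the main (mild) obstacle. If the conventions clash, I would replace \(h_g\) by its variant using \(x\mapsto\theta(x)^{-1}\) or a right translation.

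\emph{Injectivity.} Suppose \(\phi_G(g)\) is the identity \((e,e_X)\). Then \(f(g)=e_X\), and \(h_g(x)=e_A\) for every \(x\). Taking any \(x\), say \(x=e_X\), gives
\[
\theta(e_X)\,g\,\theta(e_X)^{-1}=e,
\]
and hence \(g=e\). So \(\phi_G\) has trivial kernel and is therefore an embedding.

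Alternatively, one could derive the result from the categorical machinery of \cite{DGMVdL} via the adjunctions \(L\dashv P\) and \(K\dashv R\). However, the direct verification above is elementary, and it is also the template that the later sections imitate for \(\PXMod\).
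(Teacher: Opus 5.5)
Your argument is correct, and the conventions do not clash. With $h'^{x}(y)=h'(yx)$ and $(h,x)(h',x')=(h\cdot h'^{x},xx')$, multiplicativity of $\phi_G$ is exactly the identity $h_{gg'}(y)=h_g(y)\,h_{g'}(yf(g))$, which telescopes as you wrote, so no fallback is needed.

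Your route does differ from the paper's. The paper does not check the formula by hand: it cites \cite{KK51} and then recovers $\phi_G$ categorically, following \cite{DGMVdL}. It builds $\phi_G$ as a composite of three steps:
\begin{itemize}
\item the coproduct inclusion $G\to G+X$;
\item the unit of $K\dashv R$ into $KL(E)^X\rtimes X$; the composite of these first two steps sends $g$ to $(h_g,f(g))$, with $h_g(x)=(g,x)$ in the presentation of $KL(E)=\Ker\langle f,1_X\rangle$;
\item finally $W(\chi)\rtimes 1_X$, with $\chi(g,x)=\theta(x)\,g\,\theta(xf(g))^{-1}$.
\end{itemize}
In that route the homomorphism property comes for free from functoriality. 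The only computation is that $\chi$ respects the defining relations $(e_G,x)=e$ and $(g,x)(g',xf(g))=(gg',x)$. The second relation is precisely your telescoping identity, moved from $A\wr X$ to $KL(E)$.

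Injectivity is the same in substance. In the $\PXMod$ version the paper uses protomodularity to reduce to the kernel component and then evaluates at $x=e$; in $\Grp$ this is your direct kernel computation.

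What each approach buys: yours is shorter and self-contained, needing neither the presentation of $KL(E)$ nor the \LACC{} adjunction. The categorical route explains where $A\wr X$ and the formula come from, namely as $PR(A)$ and a unit of $K\dashv R$. That is what lets the paper find the right wreath product $A^X\rtimes X$, with $A^X=\RG(\{1,\sigma,\tau\}\times X,A)$, in $\PXMod$. For that reason your closing remark is slightly off: the later sections imitate the categorical derivation, not the direct verification.
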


To understand this theorem, let us first recall the definition of the \defn{wreath product} of two groups \(A\) and \(X\): it is the semi-direct product \(\Set(X,A)\rtimes X\) where the group multiplication of \(\Set(X,A)\) is pointwise (given \(h\), \(h'\in \Set(X,A)\) and \(x\in X\), we have \((hh')(x) \eqdef h(x)h'(x)\)) and with respect to the canonical \(X\)-action on it (\(h^x(x') \eqdef h(x'x)\) for all \(h\in \Set(X,A)\) and \(x\), \(x'\in X\)).

The authors of~\cite{DGMVdL} look for a universal way to express the above theorem. To sum up, they investigate these different adjunctions for a given \LACC{} semi-abelian category \(\C\)
\[
	\begin{tikzcd}
		\SES_X(\C) \arrow[r, "L", shift left=2] \arrow[r, "\vadj", phantom] & \SSES_X(\C) \arrow[r, "K", shift left=2] \arrow[r, "\vadj", phantom] \arrow[l, "P", shift left=2] & \C \arrow[l, "R", shift left=2]\arrow[ll, "W=PR", bend left, out=45, in=135]
	\end{tikzcd}
\]
where \(\SSES_X(\C)\) is the category of \emph{split short exact sequences} (or \emph{split extensions}) over \(X\) and \(\SES_X(\C)\) is the category of \emph{short exact sequences} (or \emph{extensions}) over \(X\) for a given object \(X\) in \(\C\). If we see a split short exact sequence
\[
	S=\bigl(\begin{tikzcd}[cramped]
			0 \arrow[r] & A \arrow[r, "k", nmono] & G \arrow[r, "f", repi, shift left] & X \arrow[l, "s", mono, shift left] \arrow[r] & 0
		\end{tikzcd}\bigr)
\]
as \((A,G,k,f,s)\) and a short exact sequence
\[
	E=\bigl(\begin{tikzcd}[cramped]
			0 \arrow[r] & A \arrow[r, "k", nmono] & G \arrow[r, "f", repi] & X \arrow[r] & 0
		\end{tikzcd}\bigr)
\]
as \((A,G,k,f)\), then we can understand the above adjunctions as follows: \(K\) sends a split extension \((A,G,k,f,s)\) to its kernel object \(A\) and \(R\) is its right adjoint which exists exactly when \(\C\) is \LACC{}\footnote{The original definition of \LACC{}\@, as stated, by J.~R.~A.~Gray~\cite{Gra10}, is the existence of a right adjoint for any change-of-base functor \(f^\star\colon \Pt_Y(\C)\to \Pt_X(\C)\) for \(f\in \C(X,Y)\). However, if the category \(\C\) is pointed and (Bourn-)protomodular, then it is enough to require that the kernel functor (denoted here by \(K\)) admits a right adjoint~\cite[Theorem~5.1]{Gra12}.}, and \(P\) is the functor that forgets the splitting by mapping \((A,G,k,f,s)\) to \((A,G,k,f)\) and always admits a left adjoint \(L\). The adjunction \(K\adj R\) defines a universal wreath product but this embedding does not express properly the classical theorem since it is only about \emph{split} short exact sequences. This explains the use of the other adjunction \(L\adj P\). However, simply composing these adjunctions would give us a universal extension with respect to \(KL(E)\) and not with respect to the kernel object \(A\) of the starting short exact sequence \(E\) as wanted. If we want to embed this short exact sequence \(E\) from \(A\) over \(X\) into \(PR(A)\) rather than into \(RPKL(E)\), then we need the existence of a monomorphism \(\phi\colon E\to PRU(E)\), where \(U\colon \SES_X(\C)\to \C\) is the forgetful functor that sends an extension to its kernel object, for each \(E\). The authors expressed this last condition by studying the kernel object of
\[
	PL(E)=\bigl(\begin{tikzcd}[cramped]
			0 \arrow[r] & KL(E) \arrow[r, nmono] & G+X \ar[r, "\CoindArr{f\and 1_X}", repi] & X \arrow[r] & 0
		\end{tikzcd}
	\bigr)\text{.}
\]
This explains the link with the previous parts of the article, since \(KL(E)\) is the kernel \(\Ker(\CoindArr{f\and 1_X})\) of which we have constructed an explicit description for \(\PXMod\) (see \cref{lemma: kernel in PXMod}).

\begin{remark}[Categorical approach of the Kaluzhnin--Krasner embedding in \(\Grp\)]
	\label{rem-Gray construction of R}
	In this remark, we spell out the different steps of the categorical approach described in~\cite{DGMVdL} since we will follow the same approach for \(\PXMod\) and \(\XMod\). In addition, we point out where the case-by-case study is important. We start with a short exact sequences of groups encoded as the top sequence of
	\begin{equation}
		\label{eq-sum up of KK in Grp via category}
		\begin{tikzcd}
			\SES_X(\Grp)\arrow[d, "L"']   & 0 \arrow[r] & A \arrow[r,nmono, "k"] \arrow[d, "\lambda_1"', induced] & G \arrow[r, "f", repi] \arrow[d, "\iota_1 = \lambda_2"']                             & X \arrow[r] \arrow[d, equal]                                  & 0 \\
			\SSES_X(\Grp)\arrow[d, "RK"'] & 0 \arrow[r] & KL(E) \arrow[r,nmono] \arrow[d, "\eta_1"', induced]     & G+X \arrow[r, "\CoindArr{f \and 1_X}", shift left, repi] \arrow[d, "\phi = \eta_2"'] & X \arrow[r] \arrow[l, "\iota_2", shift left] \arrow[d, equal] & 0 \\
			\SSES_X(\Grp)\arrow[d, "P"']  & 0 \arrow[r] & KL(E)^X \arrow[r,nmono] \arrow[d, equal]                & KL(E)^X \rtimes X \arrow[r, shift left,repi] \arrow[d,equal]                         & X \arrow[r] \arrow[d, equal]  \arrow[l, shift left]           & 0 \\
			\SES_X(\Grp) \arrow[d]        & 0 \arrow[r] & KL(E)^X \arrow[r,nmono] \arrow[d, induced, "W(\chi)" '] & KL(E)^X \rtimes X \arrow[r,repi] \arrow[d, "W(\chi)\rtimes 1_X"]                     & X \arrow[r] \arrow[d, equal]                                  & 0 \\
			\SES_X(\Grp)                  & 0 \arrow[r] & A^X \arrow[r,nmono]                                     & A^X \rtimes X \arrow[r,repi]                                                         & X \arrow[r]                                                   & 0
		\end{tikzcd}
	\end{equation}
	and we follow these several steps to embed it into the bottom sequence, where
	\begin{itemize}
		\item \(KL(E)^X\) and \(A^X\) are the set-theoretic power objects;
		\item \(\lambda_2 \from G \to G+X\) is the first inclusion into the coproduct and \(\lambda_1\) its restriction between the kernels;
		\item the passage from the second line to the third one corresponds to the adjunction \(K \adj R\): \(\eta_1(a)(x) \eqdef xax^{-1}\) for \(a \in KL(E)\);
		\item combining the two previous points, since the element \(g\) may be decomposed as \(\bigl(gf(g)^{-1}\bigr) f(g)\) with \(gf(g)^{-1} \in KL(E)\) and \(f(g) \in X\), we have the composite \((\phi \comp \iota_1)(g) = (h_g,f(g))\in KL(E)^X\rtimes X\) where \(h_g(x) \eqdef (g,x)\in KL(E)\), using the presentation of \cref{subsect - Ker in Grp};
		\item \(W(\chi)\) is the postcomposition by
		      \[
			      \chi\colon KL(E) \to A \mapping (g,x)\mapsto \theta(x)\cdot g\cdot  \theta(x\cdot f(g))^{-1}
		      \]
		      so the element of the previous point is finally sent to the element \((\chi \comp h_g , f(g)) \in A^X\rtimes X\) where \((\chi \comp h_g)(x) = \theta(x)\cdot g\cdot  \theta(x\cdot f(g))^{-1}\) for \(x \in X\), recovering the known embedding for groups.
	\end{itemize}
	Note that the passage from the second line to the third one express the fact that \(\Grp\) is \LACC{}\@, which is a particular instance of \cite[Proposition~5.3]{Gra12}.

	As we may observe, the only needed specifications for the case of groups are the explicit description of \(KL(E)\) (which we have done for \(\PXMod\) in \cref{lemma: kernel in PXMod}), of \(\chi\colon KL(E)\to A\), which we will be able to construct thanks to the aforementioned description of \(KL(E)\), and finally the description of \(A^X\) for two objects in the category of which we take the group objects---\(\RG\) in the case of \(\PXMod\) and \(\Cat\) for \(\XMod\)---in order to apply Gray's construction~\cite[Proposition~5.3]{Gra12}.
	Concerning this last point, we recall that \(\RG\) (as a presheaf category  over a small category and see for instance~\cite[Theorem~2.3.4]{Bor94d}) and \(\Cat\) (see for instance~\cite{MLan98}) are cartesian closed categories as required.
\end{remark}

\subsection{Kaluzhnin--Krasner embedding of precrossed modules}
\label{sec:KK PXMod}
In this setting, the group underlying \(\Ker(\CoindArr{f\and 1_X})_{\PXMod}\) admits the same group presentation as \(\Ker(\CoindArr{f\and 1_X})_{\Grp}\)---see \eqref{eq: relation for kernel in group}---since they coincide.

\subsubsection{Power object and \LACC{} for \texorpdfstring{\(\PXMod\)}{PXMod}}\label{subsubsec - power object in PXMod}

To apply Gray's construction, using the fact that \(\PXMod\) is equivalent to the category of internal groups in the category \(\RG\) of reflexive graphs of sets, we first need to understand the power object in \(\RG\). More precisely, we will describe this category as a category of presheaves, for which there exists an explicit description of the power object: we can see \(\RG\) as the presheaf category \([\opp{\E}, \Set]\) where \(\E\) is the small category with one object \(\ast\) and two non-identity arrows \(\sigma\) and \(\tau\) whose composites are given by
\begin{align}
	\label{eq:composition RG presheaf}
	\sigma \tau & = \sigma & \tau \sigma & = \tau\text{.}
\end{align}
Indeed, a reflexive graph can be seen as a functor \(F\in  [\opp{\E}, \Set]\), i.e.\ the data of a set \(A = F(\ast)\) equipped with two endomorphisms \(s_A = F(\sigma)\), \(t_A = F(\tau)\) such that
\begin{gather*}
	t_A s_A = F(\tau)\circ F(\sigma) = F(\sigma\tau) = F(\sigma) = s_A \\
	s_A t_A = F(\sigma)\circ F(\tau) = F(\tau\sigma) = F(\tau) = t_A
\end{gather*}
which gives back the usual characterisation as expected (see \cref{lemma - RG(V) variety}).

With this description of \(\RG\) as a presheaf category, we can follow the following construction to find its power object:

\begin{lemma}
	\label{lem:exp PXMod}
	Given two reflexive graphs \(A = (A,s_A,t_A)\) and \(X = (X,s_X,t_X)\), their power object \(A^X\) is given by the set
	\[
		A^X = \RG(\{1,\sigma,\tau\} \times X,A)
	\]
	where the unary endomorphisms on \(\{1,\sigma,\tau\}\) are given by multiplication on the right in \(\E\) by \(\sigma\) and \(\tau\), respectively, following the composition rules of~\eqref{eq:composition RG presheaf}. The unary operations on \(A^X\) are then given by
	\[
		s(\phi)(u,x) \eqdef \phi(\sigma u,x) \qquad\qquad t(\phi)(u,x) \eqdef \phi(\tau u,x)
	\]
	for \(\phi \in A^X\), \(u \in \{1,\sigma,\tau\}\) and \(x \in X\). The evaluation map \(\epsilon_X \from A^X \times X \to A\) takes a couple \((\phi,x) \in A^X \times X\) and sends it to the element \(\phi(1,x) \in A\).
\end{lemma}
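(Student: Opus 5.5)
The plan is to instantiate the standard formula for exponentials in a presheaf category, $A^X(c)\cong[\opp{\E},\Set](y(c)\times X, A)$, where $y$ is the Yoneda embedding, and then read everything off in the single-object case. Since $\E$ has one object $\ast$, the representable $y(\ast)=\E(\placeholder,\ast)$ has underlying set $\{1,\sigma,\tau\}$. A morphism $v$ acts on it by precomposition $u\mapsto uv$, which in the paper's convention is right multiplication. The composition rules \eqref{eq:composition RG presheaf} then give its reflexive graph structure: $s(1)=\sigma$, $s(\sigma)=\sigma\sigma=\sigma$, $s(\tau)=\tau\sigma=\tau$, and dually for $t$. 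A direct check shows $ts=s$ and $st=t$ on this three-element set, so $y(\ast)$ is indeed an object of $\RG$ in the sense of \cref{lemma - RG(V) variety}. The product $y(\ast)\times X$ is computed pointwise, with endomorphisms $s\times s_X$ and $t\times t_X$, in line with \cref{lemma:adding unary oper}. This yields the underlying set $A^X=\RG(\{1,\sigma,\tau\}\times X,A)$.

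Next I will describe the presheaf action on $A^X$. For $v\in\E$, the map $A^X(v)$ is precomposition with $y(v)\times 1_X$, where $y(v)$ is postcomposition $u\mapsto vu$. This gives $s(\phi)(u,x)=\phi(\sigma u,x)$ and $t(\phi)(u,x)=\phi(\tau u,x)$, as stated. I should check that the results are again morphisms of reflexive graphs; this holds because left and right multiplication commute by associativity in $\E$. The equations $ts=s$ and $st=t$ on $A^X$ follow from functoriality, or directly from $\sigma\tau=\sigma$ and $\tau\sigma=\tau$.

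Rather than relying only on the general presheaf theorem, I will also verify the universal property directly, since that yields the evaluation map. Define $\epsilon(\phi,x)=\phi(1,x)$. This respects $s$: on one side $s_A(\phi(1,x))=\phi(s(1),s_X x)=\phi(\sigma,s_Xx)$, because $\phi$ is a graph morphism; on the other, $\epsilon(s\phi,s_Xx)=\phi(\sigma\cdot1,s_Xx)$. The same argument works for $t$. Given a morphism $g\colon Y\times X\to A$, define its transpose by
\[
\hat g(y)(u,x)=g(u\cdot y,x),
\]
where $1\cdot y=y$, $\sigma\cdot y=s_Y(y)$ and $\tau\cdot y=t_Y(y)$. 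We need $\hat g(y)$ to be a graph morphism, which means $g(us\cdot y,s_Xx)=s_A\,g(u\cdot y,x)$; this holds since $g$ commutes with $s$ and $(us)\cdot y=s_Y(u\cdot y)$. We also need $\hat g$ to commute with $s$ and $t$, and $\epsilon\circ(\hat g\times1_X)=g$ holds by construction. For uniqueness, any $\psi$ with $\epsilon(\psi\times1)=g$ is forced by $\psi(y)(\sigma,x)=\psi(y)(s(1),x)$ together with naturality in $y$.

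The main obstacle is bookkeeping of left versus right actions: the identities $(us)\cdot y=s_Y(u\cdot y)$ and the contravariance of $\opp{\E}$ must match the convention $F(\sigma)F(\tau)=F(\tau\sigma)$ used in the paper. I will fix that convention first and then check each of the four identities explicitly on the three elements $1,\sigma,\tau$.
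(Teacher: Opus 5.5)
Your proposal is correct and follows the same route as the paper: the paper's proof simply instantiates Mac Lane--Moerdijk's formula \(A^X(\ast)=\RG(y(\ast)\times X,A)\) for exponentials in a presheaf category, and your direct check of the universal property (transpose \(\hat g(y)(u,x)=g(u_Y(y),x)\), evaluation at \(u=1\)) is what the paper spells out in the remark after the lemma. One small precision on the uniqueness step: the value is forced by \(\psi(y)(\sigma,x)=s(\psi(y))(1,x)=\psi(s_Y(y))(1,x)=g(s_Y(y),x)\), that is, by the left-multiplication operation on \(A^X\) together with naturality of \(\psi\), and not by how \(\psi(y)\) interacts with right multiplication \(s(1)=\sigma\) on \(\{1,\sigma,\tau\}\).
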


\begin{proof}
	Straightforward application of the construction of \cite[Section~I.6, Proposition~1]{MacLane-Moerdijk} for a presheaf category applied on \([\opp{\E}, \Set]\).
\end{proof}

\begin{remark}
	Using the evaluation map---which is the counit of the adjunction \((\placeholder) \times X \adj (\placeholder)^X\)---we can describe the bijection \(\RG(Y \times X , A) \iso \RG(Y , A^X)\) for \(Y \in \RG\): given a morphism \(g \from Y \to A^X\), it is sent to \(f \eqdef \epsilon_X \comp (g \times X) \in \RG(Y \times X , A)\) which maps a couple \((y,x) \in Y \times X\) to the element
	\[
		\epsilon_X\bigl(g(y),x\bigr) = g(y)(1,x) \in A\text{.}
	\]
	The inverse bijection sends	\(f \from Y \times X \to A\) to the morphism \(g \from Y \to A^X\) given by
	\[
		g(y)(u,x) \eqdef u\bigl(g(y)\bigr)(1,x) = g\bigl(u_Y(y)\bigr)(1,x) = f(u_Y(y),x)
	\]
	for \(u\in \{1,\sigma, \tau\}\) and \(x\in X\), where \(u_Y\) (the image of \(u\) through the presheaf associated to \(Y\)) is the unary operation on \(Y\) associated to \(u\). Indeed, the value for \(u=1\) is directly determined by the bijection in the other direction and, for \(u \in \{\sigma,\tau\}\), we use the fact that \(g \from Y \to A^X\) has to be a morphism of reflexive graphs, i.e.\ \(u\circ g=g\circ u_Y\). This observation will be central to endow the reflexive graph \(A^X\) with a group structure---turning it into a precrossed module---and a group action of \(X\).
\end{remark}

Now that we know that \(\RG\) is cartesian closed and that we have a description of its power object, we can apply the construction of \cite[Proposition~5.3]{Gra12} to obtain the right adjoint \(R \from \PXMod = \Grp(\RG) \to \SSES_X(\PXMod)\). We turn the reflexive graph \(A^X\) into a precrossed module by equipping it with the pointwise group structure, which guarantees that the unary operations on \(A^X\) are group morphisms. For instance, we have \(s(\phi\cdot \phi')=s(\phi)\cdot s(\phi')\) since for \((u,x)\in \{1,\sigma, \tau\}\times X\):
\begin{align*}
	s(\phi\cdot\phi')(u,x)=(\phi\cdot\phi')(\sigma u,x)=\phi(\sigma u,x)\cdot\phi'(\sigma u,x)=s(\phi)(u,x)\cdot s(\phi')(u,x)\text{.}
\end{align*}
We define the group-action of \(X\) on \(A^X\) by sending \(\bigl((\phi,x),x'\bigr) \in (A^X \times X) \times X\) to \(\epsilon_X(\phi,x'x)=\phi(1,x'x) \in A\) so, by the adjunction \((\placeholder) \times X \adj (\placeholder)^X\), \(\phi^x(1,x') = \phi(1,x'x)\). As a result, we can then find the value of \(\phi^x\) on an arbitrary \((u,x') \in \{1,\sigma,\tau\} \times X\):
\begin{equation}
	\label{eq - complete description of the action of X on the power object}
	\phi^x(u,x')\overset{(\alpha)}{=}\epsilon_X\left(u_{X\times A^X}(x,\phi), x'\right)\overset{(\beta)}{=}\epsilon_X\left((u_X(x), u(\phi)), x'\right)=u(\phi)(1,x'u_X(x))\overset{(\gamma)}{=}\phi(u,x'u_X(x))
\end{equation}
where \(u_X\) denotes the unary operations associated on \(X\), \((\alpha)\) follows from the adjunction \((\placeholder) \times X \adj (\placeholder)^X\) and the above definition of the action, \((\beta)\) is a consequence from the definition of \(u_{X\times A^X}\) (the unary operations associated on \(X\times A^X\)), and \((\gamma)\) follows from the definition of the unary operation on \(A^X\).

In particular, if we consider \(u\), \(v\in \{1,\sigma,\tau\}\), \(x\), \(x'\in X\) and \(\phi\in A^X\), then
\begin{equation}
	\label{eq - action and composition clear}
	\phi^x(uv,x')=\phi(u\circ v,x'\cdot (v_X\circ u_X)(x))\text{.}
\end{equation}
The reverse order between the two components follows the construction: the composition in \(\{1,\sigma,\tau\}\) (here \(u \circ v\)) is in the reverse order of the one for the unary operations (here \(v_X\circ u_X\)).

The above construction is equivalent to the usual notion of internal action~\cite[Section~4]{BJK05} and in particular can be expressed as the split short exact sequence
\[
	\begin{tikzcd}[cramped]
		0 \arrow[r] & A^X \arrow[r, nmono] & A^X\rtimes X \arrow[r, repi, shift left] & X \arrow[l, mono, shift left] \arrow[r] & 0
	\end{tikzcd}
\]
where \(A^X\rtimes X\) is the usual group-theoretic semi-direct product, i.e.\ the set \(A^X\times X\) equipped the product given by
\[
	(\phi,x)\cdot (\phi',x')=(\phi \cdot \phi'^x, x\cdot x')
\]
for \(\phi\), \(\phi'\in A^X\), \(x\), \(x'\in X\). The unary operations on this semi-direct product \(A^X \rtimes X\) are induced by those of \(A^X\) and \(X\), i.e.\ are given componentwise. Clearly these operations satisfies the unary conditions (see \cref{lemma - RG(V) variety}) so we just have to check that they define group morphisms. Let \(\phi\), \(\phi'\in A^X\) and \(x\), \(x'\in X\). We have
\begin{align*}
	(s\rtimes s_X)\bigl((\phi,x)\cdot (\phi',x')\bigr)   & = (s\rtimes s_X)(\phi \cdot \phi'^x, xx')=\bigl(s(\phi \cdot \phi'^x),s_X(xx')\bigr)                                     \\
	(s\rtimes s_X)(\phi,x)\cdot (s\rtimes s_X)(\phi',x') & = \bigl(s(\phi),s_X(x)\bigr)\cdot \bigl(s(\phi'),s_X(x')\bigr)=\bigl(s(\phi) \cdot s(\phi')^{s_X(x)},s_X(x)s_X(x')\bigr)
\end{align*}
where the second components are equal since \(s_X\) is a morphism of groups. Concerning the first components, we compare their values applied on \((u,y)\in \{1,\sigma,\tau\}\times X\). For the top expression, we have
\[
	\begin{split}
		s(\phi\cdot \phi'^x)(u,y) & =s(\phi)(u,y)\cdot s\bigl(\phi'^x\bigr)(u,y)=\phi(\sigma u,y)\cdot \phi'^x(\sigma u,y) \\
		                          & =\phi(\sigma u,y)\cdot \phi'\bigl(\sigma u, y(u_X\circ s_X)(x)\bigr)
	\end{split}
\]
where the last equation follows from \eqref{eq - action and composition clear}, and, for the bottom expression,
\[
	\begin{split}
		\bigl(s(\phi)\cdot s(\phi')^{s_X(x)}\bigr)(u,y) & = s(\phi)(u ,y)\cdot s(\phi')^{s_X(x)}(u,y) = \phi(\sigma u,y)\cdot s(\phi')\bigl(u,yu_X(s_X(x))\bigr) \\
		                                                & =\phi(\sigma u,y)\cdot \phi'\bigl(\sigma u, y(u_X\circ s_X)(x)\bigr)\text{.}
	\end{split}
\]
Hence, the two expressions are equal and this proves that \(s\rtimes s_X\) is a group morphism. Similar computations show that \(t\rtimes t_X\) is also a morphism in \(\Grp\).

The above construction describes in detail the functor \(R\colon \PXMod\to \SSES_X(\PXMod)\) (the right adjoint to the kernel functor) on the objects. To complete this section, we need to explain how \(R\) acts on morphisms. As observed above, given \(f\in \PXMod(A,B)\), it suffices to give the component of \(R(f)\) between the kernels \(A^X\) and \(B^X\) to build a morphism of split short exact sequences over \(X\) since the inclusions into the middle object are jointly epic. For \(\phi\in A^X\) and \((u,x)\in \{1,\sigma,\tau\}\times X\), we set
\begin{equation}
	\label{eq - R functor on morphisms}
	R(f)(\phi)(u,x)\coloneq f(\phi(u,x))\text{.}
\end{equation}

To conclude, following Gray's construction~\cite{Gra12}, the component of the unit for the adjunction \(K \dashv R\) is determined by its action on the kernel objects
\[
	\eta_A\colon A\to A^X\colon \eta_A(a)(u,x)\coloneq u_A(a)^x
\]
for a given \(A\in \PXMod\), where \(u_A\) is the unary operation on \(A\) associated to \(u\) and the action is the one corresponding to the starting split short exact sequence (see for instance \cite[Section~4]{BJK05}).

\subsubsection{Suitable sections}
\label{sec - suitable sections in PXMod}
In~\cite{DGMVdL}, the authors consider a section \(\theta\colon X\to G\) for the morphism \(f\) viewed in the underlying cartesian closed category where the internal groups are taken (which in their situation is \(\Set\)). This strategy allows to recover the classical Kaluzhnin--Krasner embedding for groups and extend the result from the context of \(\SSES_X(\Grp)\) to the one of \(\SES_X(\Grp)\).

To recover the classical embedding, they define
\[
	\chi\colon \Ker(\CoindArr{f\and 1_X})_{\Grp}\to A \mapping (g,x)\mapsto \theta(x)\cdot g\cdot  \theta(x\cdot f(g))^{-1}
\]
where \(\theta\) is a set-theoretical section of \(f\) and this is where the presentation of \(\Ker(\CoindArr{f\and 1_X})\) (see \cref{subsect - Ker in Grp}) comes into play to check that this assignment indeed defines a group homomorphism. Since the underlying group of this kernel stays the same in \(\PXMod\), we may apply the same procedure and consider
\[
	\chi\colon \Ker(\CoindArr{f\and 1_X})_{\PXMod}\to A \mapping (g,x)\mapsto \theta(x)\cdot g \cdot \theta(x\cdot f(g))^{-1}
\]
for the embedding in \(\PXMod\). We will see in \cref{sec - chi problem with XMod} that this strategy could not be replicated in \(\XMod\) since the underlying group will be a quotient of the kernel here. However, even if we already know that \(\chi\) is a group morphism (thanks to the group presentation which remains valid in the \(\PXMod\) case), we must also check the compatibility with the unary operations to make sure that \(\chi\) is a morphism of precrossed modules as desired.

Without any further assumptions on \(\theta\), this is not the case in general, as the following example shows.

\begin{example}
	\label{ex - chi not compatible}
	Consider the precrossed modules \(X=(\mathbb{Z}/2\mathbb{Z},\mathrm{id},\mathrm{id})\) and \(G=(S_3\times\mathbb{Z}/2\mathbb{Z},p,p)\), where \(S_3\) is the symmetric group on \(\{1,2,3\}\) (written in cycle notation, with permutations composed as functions, i.e.\ from right to left) and \(p\) is the idempotent endomorphism \((a,i)\mapsto(\overline{a},i)\), with \(\overline{a}=e\) for \(a\) even and \(\overline{a}=(1\,2)\) for \(a\) odd, so that \(p\) retracts \(S_3\) onto \(\langle(1\,2)\rangle\). As \(p\) preserves the second component, the projection \(f\colon G\to X \mapping (a,i)\mapsto i\), is a morphism of precrossed modules, with kernel \(A=(S_3\times\{0\},p,p)\).

	Choose the set-theoretical section \(\theta\colon X\to G\) given by \(\theta(0)=(e,0)\) and \(\theta(1)=(\rho,1)\), where \(\rho=(1\,2\,3)\). It is a section of \(f\), but not of the underlying reflexive graph, since \(p(\theta(1))=(e,1)\neq(\rho,1)=\theta(\mathrm{id}(1))\). Write \(s=p\) for the unary operation and take the generator \((g,x)\) with \(g=((1\,2),0)\in A = \Ker f \leq G\) and \(x=1\in X\). As \(g\) is fixed by \(p\) and \(x\) by \(\mathrm{id}\), we have \(s(g,x)=(g,x)\), hence \(\chi(s(g,x))=\chi(g,x)\); and, since \(A\) is normal in \(G\),
	\[
		\chi(g,x)=\theta(1)\cdot g\cdot\theta(1)^{-1}=\bigl(\rho\,(1\,2)\,\rho^{-1},0\bigr)=\bigl((\rho(1)\ \rho(2)),0\bigr)=\bigl((2\,3),0\bigr)\text{,}
	\]
	using that, in the right-to-left convention, conjugation by \(\rho\) relabels the entries of a cycle by \(\rho\).
	This element is not fixed by \(s=p\): indeed \(s(\chi(g,x))=p\bigl((2\,3),0\bigr)=\bigl((1\,2),0\bigr)\), whereas \(\chi(s(g,x))=\bigl((2\,3),0\bigr)\). Thus \(\chi\) does not commute with \(s\) and is not a morphism of precrossed modules.
\end{example}

Therefore, we consider situations where the set-theoretical section \(\theta\) is \emph{compatible with the unary operations}, i.e.\ it is a section of \(f\) in the underlying cartesian closed category \(\RG\) instead of \(\Set\).

\begin{proposition}
	\label{prop:chi morphism}
	If \(f\) admits a section \(\theta \from X \to G\) compatible with the unary operations, then the group morphism \(\chi \colon KL(E)\to A \mapping (g,x) \mapsto \theta(x)\cdot g \cdot \theta(x\cdot f(g))^{-1}\) defines a morphism of precrossed modules.
\end{proposition}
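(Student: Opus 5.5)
Since \(\chi\) is already known to be a group homomorphism (the presentation \(\GenRel{S}{R}\) of \cref{subsect - Ker in Grp} is unchanged in \(\PXMod\) by \cref{lemma: kernel in PXMod}), what remains is compatibility with the two unary operations: \(\chi\comp s_{\PXMod}=s_A\comp\chi\) and \(\chi\comp t_{\PXMod}=t_A\comp\chi\), where \(s_A\), \(t_A\) are the restrictions of \(s_G\), \(t_G\) to \(A=\Ker f\). Both sides of each equation are group homomorphisms \(KL(E)\to A\): \(s_{\PXMod}\) is an endomorphism of \(KL(E)\) as the restriction of \(s_G+s_X\), and \(s_A\) is a group morphism. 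It therefore suffices to check each equation on the generators \((g,x)\in G\times X\), that is, on the elements \(xgf(g)^{-1}x^{-1}\) of \(G+X\). The case of \(t\) is word-for-word the same as that of \(s\), so I treat only \(s\).

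First, I compute \(s_{\PXMod}\) on a generator. As in the remark on normality via distributive \(\Omega\)-groups,
\[
s_{\PXMod}\bigl(xgf(g)^{-1}x^{-1}\bigr)=s_X(x)\,s_G(g)\,f(s_G(g))^{-1}\,s_X(x)^{-1},
\]
using \(s_Xf=fs_G\). In the presentation this says \(s_{\PXMod}(g,x)=(s_G(g),s_X(x))\). Hence
\[
\chi\bigl(s_{\PXMod}(g,x)\bigr)=\theta(s_X(x))\cdot s_G(g)\cdot\theta\bigl(s_X(x)\cdot f(s_G(g))\bigr)^{-1}.
\]

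Second, I compute the other side. Since \(\chi(g,x)\in A\subseteq G\) and \(s_A\) is the restriction of the group morphism \(s_G\),
\[
s_A(\chi(g,x))=s_G(\theta(x))\cdot s_G(g)\cdot s_G\bigl(\theta(x f(g))\bigr)^{-1}.
\]
The hypothesis that \(\theta\) is a morphism in \(\RG\) gives \(s_G\theta=\theta s_X\). This turns the expression into
\[
\theta(s_X(x))\cdot s_G(g)\cdot\theta\bigl(s_X(x f(g))\bigr)^{-1}.
\]
Because \(s_X\) is a group morphism and \(s_Xf=fs_G\), we have \(s_X(xf(g))=s_X(x)\,f(s_G(g))\), which matches the first computation.

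The only delicate point is the first step: identifying \(s_{\PXMod}\) on the abstract generator \((g,x)\) with the generator \((s_G(g),s_X(x))\). This needs the isomorphism between the presentation \(\GenRel{S}{R}\) and the subgroup of \(G+X\), via \((g,x)\mapsto xgf(g)^{-1}x^{-1}\), together with the fact that \(s_G+s_X\) acts letterwise on words in \(G+X\). Once this is granted, the rest is the short computation above. Note that the hypothesis on \(\theta\) is used exactly once, to pull \(s_G\) through \(\theta\); this is precisely what fails in \cref{ex - chi not compatible}.
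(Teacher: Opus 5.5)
Your proof is correct and is essentially the paper's argument. The paper likewise computes \(\chi(s(g,x))=\chi(s_G(g),s_X(x))\) on generators and pulls \(s_G\) through \(\theta\) and \(f\) using \(s_G\theta=\theta s_X\), \(s_Xf=fs_G\) and multiplicativity of \(s_G\); you additionally spell out why checking on generators suffices and why \(s_{\PXMod}\) acts componentwise on them, both of which the paper leaves implicit.
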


\begin{proof}
	Since this is the same underlying function as in the case of groups, we already know it is a group homomorphism and we only need to check the compatibility with the unary operations.

	Let us do it for the unary operation \(s\), the case for \(t\) being similar. We compute, for \((g,x) \in KL(E)\),
	\[
		\begin{split}
			\chi(s(g,x)) & = \chi(s(g),s(x)) = \theta(s(x)) \cdot s(g) \cdot \theta\bigl(s(x) \cdot f(s(g))\bigr)^{-1} = s(\theta(x)) \cdot s(g) \cdot \theta\bigl(s(x) \cdot s(f(g))\bigr)^{-1} \\
			             & = s(\theta(x)) \cdot s(g) \cdot s\bigl(\theta(x \cdot f(g))^{-1}\bigr) = s\bigl(\theta(x)\cdot g \cdot \theta(x\cdot f(g))^{-1}\bigr) = s(\chi(g,x))\text{,}
		\end{split}
	\]
	finishing the proof.
\end{proof}

Let us explicit examples where the assumptions of the result above are satisfied. A first situation is when \(X\) is a projective reflexive graph since a regular epimorphism in \(\PXMod\) is also a regular epimorphism in \(\RG\).

Another instance where we can find a compatible section is if we impose some conditions on the regular epimorphism \(f\). We will return to the reflexive graph point of view of precrossed modules (see \cref{lemma - RG(V) variety}) and consider a regular epimorphism \(f\colon G\to X\) of precrossed modules (i.e.\ a couple \((f_0\colon G_0\to X_0, f_1\colon G_1\to X_1)\) such that \(f_0\) and \(f_1\) are regular epimorphisms in the underlying category) that is in addition an \emph{internal fully faithful} functor~\cite{Bunge-Pare}, i.e.\ such that the right-hand diagram
\[
	\begin{tikzcd}
		K_1\ar[rr,"k_1", nmono]\arrow[dd, "d^K", shift left=2] \arrow[dd, "c^K"', shift right=2] &  & G_1\ar[rr, "f_1"] \arrow[dd, "d^G", shift left=2] \arrow[dd, "c^G"', shift right=2] &  & X_1 \arrow[dd, "d^X", shift left=2] \arrow[dd, "c^X"', shift right=2] \\
		\\
		K_0 \ar[rr,"k_0" ', nmono] \arrow[uu, "\iota^K" description]                             &  & G_0 \ar[rr, "f_0"'] \arrow[uu, "\iota^G" description]                               &  & X_0\arrow[uu, "\iota^X" description]
	\end{tikzcd}
\]
is a joint pullback (see below). We may notice that the left-hand vertical morphisms are induced by kernel properties and define a reflexive graph.

Since, by assumption, \(f_0\) is a regular epimorphism in \(\Grp\), we can consider a set-theoretical section \(\theta_0\) of it. We claim that by the above joint pullback we can define a set-theoretical section \(\theta_1\) of \(f_1\) such that \((\theta_0,\theta_1)\) defines a morphism in \(\RG\).

First, it is important to notice that we are considering a joint pullback in \(\Grp\). This means a pullback in \(\Grp\) (see the square \((\dagger)\)) that remains a pullback in \(\Set\). This will be the key point to build functions and not group morphisms. Hence we can define \(\theta_1\) as the unique function such that
\[
	f_1 \theta_1=1_{X_1}\text{ and } \IndArr{d^G\and c^G}\theta_1 = (\theta_0\times \theta_0)\IndArr{d^X \and c^X}\text{.}
\]
\[
	\begin{tikzcd}
		X_1\ar[drr, bend left, "1_{X_1}"] \ar[dd, "\IndArr{d^X\and c^X}"'] \ar[dr, induced, "\theta_1"] &                                                                                              &                                   \\
		& G_1\pb[.25]{rd} \ar[dr, phantom, "(\dagger)"] \ar[r, "f_1"] \ar[d, "\IndArr{d^G\and c^G}" '] & X_1\ar[d, "\IndArr{d^X\and c^X}"] \\
		X_0\times X_0\ar[r, "\theta_0\times \theta_0 " ']                                               & G_0\times G_0\ar[r, "f_0\times f_0"']                                                        & X_0\times X_0
	\end{tikzcd}
\]

By construction, the couple \((\theta_0,\theta_1)\) is already compatible with \(d^X\), \(d^G\), \(c^X\) and \(c^G\). It remains to check the compatibility with \(\iota^G\) and \(\iota^X\): it is the case since we verify that
\begin{align*}
	 & \IndArr{d^G \and c^G}e^G \theta_0 = \IndArr{\theta_0\and \theta_0}= \IndArr{d^G \and c^G}\theta_1 e^X \\
	 & f_1 e^G \theta_0 = e^X = f_1 \theta_1 e^X\text{,}
\end{align*}
the conclusion following by uniqueness.

Therefore, we proved:

\begin{lemma}\label{lemma - situation for a suitable section in RG}
	If we consider a fully faithful functor \((f_0,f_1)\) such that \(f_0\) is a surjective group morphism then we can define a morphism of reflexive graphs \((\theta_0,\theta_1)\) such that \(\theta_0\) and \(\theta_1\) are, respectively, set-theoretical sections of \(f_0\) and \(f_1\).\noproof
\end{lemma}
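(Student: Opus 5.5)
The argument is essentially the construction preceding the statement, so the plan is to organise it cleanly in three steps.

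\emph{Step 1: choose \(\theta_0\).} Since \(f_0\) is a surjective group morphism, the axiom of choice gives a function \(\theta_0\from X_0\to G_0\) with \(f_0\theta_0=1_{X_0}\). No compatibility is needed at this level, because on \(G_0\) and \(X_0\) there is no structure beyond the underlying sets to respect.

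\emph{Step 2: define \(\theta_1\).} Here I would use full faithfulness: the square \((\dagger)\), formed by \(f_1\), \(\IndArr{d^G\and c^G}\), \(\IndArr{d^X\and c^X}\) and \(f_0\times f_0\), is a pullback in \(\Grp\) that is also a pullback in \(\Set\), since limits of groups are computed on underlying sets. The two functions \(1_{X_1}\) and \((\theta_0\times\theta_0)\IndArr{d^X\and c^X}\) form a cone over the cospan, because
\[
(f_0\times f_0)(\theta_0\times\theta_0)\IndArr{d^X\and c^X}=\IndArr{d^X\and c^X}=\IndArr{d^X\and c^X}1_{X_1}.
\]
So the set-theoretic universal property yields a unique function \(\theta_1\) with \(f_1\theta_1=1_{X_1}\) and \(\IndArr{d^G\and c^G}\theta_1=(\theta_0\times\theta_0)\IndArr{d^X\and c^X}\). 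The first equation says that \(\theta_1\) is a section of \(f_1\). The second, composed with the two product projections, gives \(d^G\theta_1=\theta_0 d^X\) and \(c^G\theta_1=\theta_0 c^X\).

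\emph{Step 3: compatibility with the identities.} It remains to check \(\iota^G\theta_0=\theta_1\iota^X\). Both sides are maps \(X_0\to G_1\), so by the joint monicity of the pullback projections it suffices to compare them after composing with \(f_1\) and with \(\IndArr{d^G\and c^G}\).
\begin{itemize}
\item For the first composite, \(f_1\iota^G\theta_0=\iota^X f_0\theta_0=\iota^X\), using that \(f\) is a morphism of reflexive graphs, and \(f_1\theta_1\iota^X=\iota^X\).
\item For the second composite, \(\IndArr{d^G\and c^G}\iota^G\theta_0=\IndArr{\theta_0\and\theta_0}\) because \(d^G\iota^G=1=c^G\iota^G\). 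Also \(\IndArr{d^G\and c^G}\theta_1\iota^X=(\theta_0\times\theta_0)\IndArr{1\and 1}=\IndArr{\theta_0\and\theta_0}\).
\end{itemize}
Uniqueness then gives the equality. Together with Step 2, this shows that \((\theta_0,\theta_1)\) is a morphism in \(\RG\) whose components are sections of \(f_0\) and \(f_1\).

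The only delicate point is Step 2: one must make sure that the pullback in \(\Grp\) is also a pullback in \(\Set\), so that it has the universal property for arbitrary functions and not just for homomorphisms. This holds because the forgetful functor \(\Grp\to\Set\) preserves limits, which is exactly the meaning of ``joint pullback'' here.
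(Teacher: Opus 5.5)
Your proposal is correct and follows the paper's argument. You choose \(\theta_0\) using surjectivity of \(f_0\), obtain \(\theta_1\) from the universal property of the square \((\dagger)\) regarded as a pullback in \(\Set\), and verify \(\iota^G\theta_0=\theta_1\iota^X\) by comparing composites with \(f_1\) and \(\IndArr{d^G\and c^G}\) and invoking uniqueness, exactly as the paper does (your explicit cone check is a detail the paper leaves implicit).
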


As a consequence, we can apply \cref{prop:chi morphism} to obtain:

\begin{corollary}
	\label{cor - definition of chi}
	If we consider a fully faithful functor \((f_0,f_1)\) such that \(f_0\) is a surjective group morphism, then \(f\) admits a compatible section \(\theta\), and the associated group morphism
	\[
		\chi\colon KL(E)\to A \mapping (g,x)\mapsto \theta(x)\cdot g \cdot \theta(x\cdot f(g))^{-1}
	\]
	is then a morphism of precrossed modules.\noproof
\end{corollary}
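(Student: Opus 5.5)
The statement combines \cref{lemma - situation for a suitable section in RG} with \cref{prop:chi morphism}, so the plan has two steps: produce a section of \(f\) in \(\RG\), then check that it satisfies the hypotheses of \cref{prop:chi morphism}.

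First, I apply \cref{lemma - situation for a suitable section in RG} to the fully faithful functor \((f_0,f_1)\) with \(f_0\) surjective. It gives a morphism of reflexive graphs \((\theta_0,\theta_1)\) whose components are set-theoretical sections of \(f_0\) and \(f_1\). As usual, \(\theta_1\) is built from the joint pullback \((\dagger)\), using that it is a pullback in \(\Set\). I then translate this pair into the \(\Omega\)-group description of \cref{lemma - RG(V) variety}. In that description the precrossed module \(G\) is the group \(G_1\) with \(s=\iota^G d^G\) and \(t=\iota^G c^G\). I take \(\theta\eqdef\theta_1\colon X_1\to G_1\), the component that acts on the underlying sets.

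Second, I check that \(\theta\) is compatible with the unary operations and is a section of \(f\). For compatibility, I compute \(s_G\theta_1=\iota^G d^G\theta_1=\iota^G\theta_0 d^X=\theta_1\iota^X d^X=\theta_1 s_X\). This uses that \((\theta_0,\theta_1)\) commutes with \(d\) and with \(\iota\). The same computation with \(c\) in place of \(d\) gives \(t_G\theta_1=\theta_1 t_X\). The section property \(f\theta=1_X\) is exactly \(f_1\theta_1=1_{X_1}\). So \(\theta\) is a section of \(f\) in the underlying cartesian closed category \(\RG\), not only in \(\Set\).

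Finally, \cref{prop:chi morphism} applies verbatim. It tells us that the group morphism \(\chi\), which is already known to be a group morphism from the presentation of \(KL(E)\), commutes with \(s\) and \(t\). Hence \(\chi\) is a morphism of precrossed modules.

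The only delicate point is compatibility with \(\iota\). It is what lets the equalities for \(d\) and \(c\) transfer to the idempotents \(s\) and \(t\). The paper already verified it in the construction preceding \cref{lemma - situation for a suitable section in RG}, through uniqueness in the pullback, so I expect no real obstacle. I will also note that the extension \(E\) is regarded with \(f\) corresponding to \((f_0,f_1)\), so the kernel \(A\) is the one inherited from \(G_1\).
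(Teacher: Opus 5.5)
Your proposal is correct and follows the paper's route exactly: you feed the reflexive-graph section of \cref{lemma - situation for a suitable section in RG} into \cref{prop:chi morphism}. You also spell out a step the paper leaves implicit, namely why \(\theta_1\) commutes with \(s=\iota^G d^G\) and \(t=\iota^G c^G\), and that check is done correctly.
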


\begin{remark}[Reformulation of the previous assumptions]\label{rem - reformulation situation for a suitable section in RG}
	Since the proof of the previous lemma is based on a pullback, we may reformulate the assumptions on \((f_0,f_1)\) as follows: if we consider a regular epimorphism \((f_0,f_1)\) in \(\PXMod\) then it will be a fully faithful functor if and only if the induced arrow \(\IndArr{d^K\and c^K}\colon K_1\to K_0\times K_0\) is an isomorphism. Indeed, since \(k_1\) is the kernel of \(f_1\) and \(k_0\times k_0\) is the kernel of \(f_0\times f_0\) then the square \((\dagger)\) is a pullback if and only if \(\IndArr{d^K\and c^K}\) is an isomorphism (see for instance \cite[Lemma~4.2.4]{BB04}).
\end{remark}

The assumptions of \cref{lemma - situation for a suitable section in RG} are not surprising since they already appear in the context of a Quillen model on the category \(\Cat(\C)\) of internal categories in a given finitely complete category \(\C\):
\begin{proposition}[{\cite[Proposition~5.6]{EKVdL}}]
	If we consider the Grothendieck regular epimorphism topology on a semi-abelian category \(\C\), a functor \(\mathbf{f}\colon \mathbf{G}\to \mathbf{X}\) is a trivial fibration (for the Quillen model on \(\Cat(\C)\) defined in \cite[Theorem~5.5]{EKVdL}) if and only if it is a fully faithful functor such that \(f_0\) is a regular epimorphism.\noproof
\end{proposition}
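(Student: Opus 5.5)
The proof compares the two sides against the definition of the model structure of \cite[Theorem~5.5]{EKVdL} on \(\Cat(\C)\), where \(\C\) carries the Grothendieck regular epimorphism topology. There a \emph{weak equivalence} is an internal functor \(\mathbf{f}=(f_0,f_1)\) that is fully faithful and essentially surjective relative to regular epimorphisms. Essential surjectivity means that the composite
\[
G_0\times_{X_0}\mathrm{Iso}(\mathbf{X})\to\mathrm{Iso}(\mathbf{X})\xrightarrow{\ \mathrm{cod}\ }X_0
\]
is a regular epimorphism, the pullback being taken along \(f_0\) and the domain map. A \emph{fibration} is a functor for which the comparison
\[
\mathrm{Iso}(\mathbf{G})\to G_0\times_{X_0}\mathrm{Iso}(\mathbf{X})
\]
is a regular epimorphism; this is the relative isofibration condition. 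A trivial fibration is a functor that is both. I would use freely that, in a regular (here semi-abelian) category, regular epimorphisms are stable under pullback and composition, and that if \(gh\) is a regular epimorphism then so is \(g\).

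For the implication \enquote{\(\Leftarrow\)}, assume \(\mathbf{f}\) is fully faithful and \(f_0\) is a regular epimorphism.
\begin{enumerate}
\item Full faithfulness means \(G_1\) is the pullback of \(X_1\) along \(f_0\times f_0\). Restricting to invertible arrows (which are defined by finite limits), \(\mathrm{Iso}(\mathbf{G})\) is the pullback of \(\mathrm{Iso}(\mathbf{X})\) along \(f_0\times f_0\).
\item It follows that \(\mathrm{Iso}(\mathbf{G})\cong(G_0\times_{X_0}\mathrm{Iso}(\mathbf{X}))\times_{X_0}G_0\), the last pullback taken along the codomain and \(f_0\).
\item The comparison map of the fibration condition is then the projection away from the last factor \(G_0\). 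It is a pullback of the regular epimorphism \(f_0\), hence a regular epimorphism, so \(\mathbf{f}\) is a fibration.
\item For essential surjectivity, the identity section \(X_0\to\mathrm{Iso}(\mathbf{X})\) induces \(G_0\to G_0\times_{X_0}\mathrm{Iso}(\mathbf{X})\). Composing with the codomain map gives exactly \(f_0\), which is a regular epimorphism.
\item Since \(f_0\) factors through the essential surjectivity map, that map is a regular epimorphism too. Together with full faithfulness, \(\mathbf{f}\) is a weak equivalence.
\end{enumerate}

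For the implication \enquote{\(\Rightarrow\)}, assume \(\mathbf{f}\) is a trivial fibration. Being a weak equivalence, it is fully faithful, and it remains to show that \(f_0\) is a regular epimorphism.
\begin{enumerate}
\item By the fibration condition, \(\mathrm{Iso}(\mathbf{G})\to G_0\times_{X_0}\mathrm{Iso}(\mathbf{X})\) is a regular epimorphism.
\item Postcomposing with the essential surjectivity map, which is a regular epimorphism, gives a regular epimorphism \(\mathrm{Iso}(\mathbf{G})\to X_0\).
\item By naturality of the codomain, this composite equals \(f_0\comp\mathrm{cod}^{\mathbf{G}}\). Since it is a regular epimorphism and factors through \(f_0\), the map \(f_0\) is itself a regular epimorphism.
\end{enumerate}

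The step I expect to be delicate is the first one: identifying \(\mathrm{Iso}(\mathbf{G})\) as a pullback of \(\mathrm{Iso}(\mathbf{X})\) under full faithfulness, and keeping track of which leg each pullback uses, domain or codomain. The definitions of fibration and weak equivalence in \cite{EKVdL} may also be phrased with a different convention (codomain instead of domain), and I would first check which convention they use and adjust the maps accordingly.

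In the semi-abelian setting all internal categories are groupoids, so \(\mathrm{Iso}(\mathbf{G})=G_1\). This simplifies the bookkeeping: step 2 of \enquote{\(\Leftarrow\)} becomes exactly the square \((\dagger)\) rewritten as an iterated pullback.
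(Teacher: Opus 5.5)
Your argument is correct under the definitions you state. Weak equivalences are the fully faithful, essentially surjective functors in the sense of Bunge--Paré. Fibrations are the functors for which the comparison \(\mathrm{Iso}(\mathbf{G})\to G_0\times_{X_0}\mathrm{Iso}(\mathbf{X})\) is a regular epimorphism. Trivial fibrations are then the intersection of the two classes. You should still confirm against \cite{EKVdL} that these are exactly their definitions of weak equivalence and fibration, since the whole argument rests on them.

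There is no proof in the paper to compare with. The statement is quoted from \cite{EKVdL} without proof, only to put the hypotheses of \cref{lemma - situation for a suitable section in RG} in context. What you give is the standard Joyal--Tierney style argument, and your observation that step~2 of the converse direction is the square \((\dagger)\) written as an iterated pullback is exactly the link the paper exploits.

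Two small remarks:
\begin{itemize}
\item You only use three facts about regular epimorphisms in \(\C\): they are pullback-stable, they are closed under composition, and \(gh\) regular epic forces \(g\) regular epic. So the argument works verbatim in any regular category, and for any topology whose covering maps have these three properties. Semi-abelianness only buys the simplification \(\mathrm{Iso}(\mathbf{G})=G_1\).
\item The domain/codomain convention you plan to check is immaterial. Inversion on \(\mathrm{Iso}(\mathbf{X})\) swaps domain and codomain, so \((a,\beta)\mapsto(a,\beta^{-1})\) identifies the two possible pullbacks \(G_0\times_{X_0}\mathrm{Iso}(\mathbf{X})\). This identification is compatible with the essential-surjectivity maps. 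Precomposing with inversion on \(\mathrm{Iso}(\mathbf{G})\), it is also compatible with the comparison maps. Hence both conventions define the same weak equivalences and fibrations.
\end{itemize}
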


\subsubsection{Conclusion}\label{subsubsection - conclusion in PXMod}

As a result, if \(f\)---the cokernel morphism in the short exact sequence in \(\PXMod\)---admits a set-theoretical section compatible with the unary operations (see the previous subsection where we discussed several conditions that assure this is the case), then we can construct the embedding
\begin{equation*}
	\label{eq-sum up of KK in PXMod via category}
	\begin{tikzcd}
		\SES_X(\PXMod)\arrow[d, "L"']   & 0 \arrow[r] & A \arrow[r,nmono, "k"] \arrow[d, "\lambda_1"', induced]          & G \arrow[r, "f", repi] \arrow[d, "\iota_1 = \lambda_2"']                             & X \arrow[r] \arrow[d, equal]                                  & 0 \\
		\SSES_X(\PXMod)\arrow[d, "RK"'] & 0 \arrow[r] & KL(E) \arrow[r,nmono] \arrow[d, "\eta_{KL(E)}=\eta_1"', induced] & G+X \arrow[r, "\CoindArr{f \and 1_X}", shift left, repi] \arrow[d, "\phi = \eta_2"'] & X \arrow[r] \arrow[l, "\iota_2", shift left] \arrow[d, equal] & 0 \\
		\SSES_X(\PXMod)\arrow[d, "P"']  & 0 \arrow[r] & KL(E)^X \arrow[r,nmono] \arrow[d, equal]                         & KL(E)^X \rtimes X \arrow[r, shift left,repi] \arrow[d,equal]                         & X \arrow[r] \arrow[d, equal]  \arrow[l, shift left]           & 0 \\
		\SES_X(\PXMod)  \arrow[d]       & 0 \arrow[r] & KL(E)^X \arrow[r,nmono] \arrow[d, induced, "W(\chi)" ']          & KL(E)^X \rtimes X \arrow[r,repi] \arrow[d, "W(\chi)\rtimes 1_X"]                     & X \arrow[r] \arrow[d, equal]                                  & 0 \\
		\SES_X(\PXMod)                  & 0 \arrow[r] & A^X \arrow[r,nmono]                                              & A^X \rtimes X \arrow[r,repi]                                                         & X \arrow[r]                                                   & 0
	\end{tikzcd}
\end{equation*}
where, similarly to the case of groups, we have
\begin{itemize}
	\item \(\lambda_2\) the first inclusion and \(\lambda_1\) its restriction between the kernels;
	\item the couple \((\eta_1,\eta_2)\) is a component of the unit for the adjunction \(K\dashv R\) given by Gray's construction, determined by its value \(\eta_1(a)(u,x) \coloneq x u_{KL(E)}(a) x^{-1}\in KL(E)\), for \(a \in KL(E)\), on the kernel object;
	\item as a consequence of the two previous points, since \(g = \bigl(gf(g)^{-1}\bigr) f(g)\) with \(gf(g)^{-1} \in KL(E)\) and \(f(g) \in X\), we have
	      \[
		      (\phi \circ \iota_1)(g) = \bigl(\eta_1(gf(g)^{-1}),1\bigr) \bigl(1,f(g)\bigr) = (h_g,f(g))
	      \]
	      where \(h_g(u,x) \eqdef x u_{KL(E)}(gf(g)^{-1}) x^{-1} = x u_{KL(E)}(g) \bigl(x u_{KL(E)}(f(g))\bigr)^{-1}\), which corresponds to the element \(\bigl(u_{KL(E)}(g),x\bigr)\) in the presentation of \cref{subsect - Ker in Grp};
	\item Finally, as in the case of groups, by functoriality~\eqref{eq - R functor on morphisms}, we have \(W(\chi)(\psi)=\chi\circ \psi\) for \(\psi \in KL(E)^X\). Therefore, we can conclude that \(G\) embeds in \(A^X\rtimes X\) by
	      \[
		      g \in G \mapsto (W(\chi)\rtimes 1_X)\circ \phi\circ \iota_1(g) = (\chi \comp h_g , f(g)) \in A^X \rtimes X\text{.}
	      \]
\end{itemize}

To sum up, we have the following result:
\begin{theorem}[Kaluzhnin--Krasner in \(\PXMod\)]\label{thm - KK in PXMod}
	Any precrossed module extension
	\[
		\begin{tikzcd}
			0 \arrow[r] & A \arrow[r, "k", nmono] & G \arrow[r, "f", repi] & X \arrow[r] & 0
		\end{tikzcd}
	\]
	where \(f\) admits a reflexive graph section \(\theta \from X \to G\) embeds into the wreath product extension
	\[
		\begin{tikzcd}
			0 \arrow[r] & A^X \arrow[r, nmono] & A^X \rtimes X \arrow[r, repi] & X \arrow[r] & 0 \text{,}
		\end{tikzcd}
	\]
	where the semi-direct product \(A^X\rtimes X\) is associated to the action given in \eqref{eq - complete description of the action of X on the power object}, via the morphism of precrossed module extensions over \(X\) \((\phi_A\colon A\to A^X, \phi_G\colon G\to A^X\rtimes X)\) defined by
	\begin{align*}
		 & \phi_A(a)\eqdef W(\chi)\comp \eta_1(a)\colon (u,x)\in \{1,\sigma,\tau\}\times X\mapsto \theta(x)\cdot u_{KL(E)}(a)\cdot  \theta(x\cdot f(u_{KL(E)}(a)))^{-1}\text{,} \\
		 & \phi_G(g)\eqdef(\chi \comp h_g , f(g))
	\end{align*}
	where
	\[
		\chi \comp h_g(u,x)=\theta(x)\cdot u_{KL(E)}(g)\cdot  \theta(x\cdot f(u_{KL(E)}(g)))^{-1}\text{.}
	\]
\end{theorem}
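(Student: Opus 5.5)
The plan is to assemble the chain of adjunctions of \cref{rem-Gray construction of R}, now in \(\PXMod\), and then prove injectivity directly. First, the extension \(E\) is sent by \(L\) to the split extension \(G+X\to X\) with kernel \(KL(E)\). By \cref{lemma: kernel in PXMod}, \(KL(E)\) is the group kernel with the restricted operations \(s_G+s_X\) and \(t_G+t_X\). The inclusion \(\iota_1\) together with its restriction \(\lambda_1\) is a morphism of extensions in \(\PXMod\), because the coproduct inclusions are precrossed module morphisms by \cref{cor: coproducts in PXMod}.

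Next comes the unit \((\eta_1,\eta_2)\) of \(K\adj R\). Its existence follows from Gray's construction \cite[Proposition~5.3]{Gra12} applied to \(\Grp(\RG)\), with power object given by \cref{lem:exp PXMod} and action given by \eqref{eq - complete description of the action of X on the power object}. I only need its explicit formula \(\eta_1(a)(u,x)=x\,u_{KL(E)}(a)\,x^{-1}\). From it I compute \(\eta_2\iota_1(g)=(h_g,f(g))\), using the decomposition \(g=(gf(g)^{-1})f(g)\) and the fact that \(\eta_2\) is a group morphism extending \(\eta_1\) and the identity on \(X\).

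Then I apply \(P\), which forgets the splitting. By \cref{prop:chi morphism}, the section \(\theta\) compatible with the reflexive graph structure makes \(\chi\colon KL(E)\to A\) a morphism of precrossed modules. By functoriality \eqref{eq - R functor on morphisms}, \(R(\chi)\) is then a morphism of split extensions, namely \(W(\chi)\rtimes 1_X\) acting by postcomposition \(\psi\mapsto\chi\circ\psi\). Composing all three steps gives a morphism of extensions over \(X\). Its middle component is \(\phi_G(g)=(\chi\circ h_g,f(g))\), and its kernel component is the restriction \(\phi_A=W(\chi)\circ\eta_1\circ\lambda_1\), which produces exactly the displayed formulas.

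The substantive remaining point, and the main obstacle, is that \(\phi_G\) is a monomorphism. In a variety this means injectivity, and by the short five lemma (valid in the semi-abelian category \(\PXMod\)) it suffices that \(\phi_A\) is injective. I will test \(\phi_A(a)\) at \(u=1\) and \(x=e\). For \(a=k(a)\in A\), the generator \((a,e)\) satisfies \(\chi(a,e)=\theta(e)\,a\,\theta(f(a))^{-1}\). Since \(f(a)=e\), this equals \(\theta(e)\,a\,\theta(e)^{-1}\). Because \(\theta\) is an \(\RG\)-section, \(\theta(e)\) need not be \(e\). However, \(\theta(e)\in A\) because \(f\theta(e)=e\), so conjugation by \(\theta(e)\) is injective. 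Hence \(\phi_A(a)(1,e)=e\) forces \(a=e\), and \(\phi_A\) is injective.
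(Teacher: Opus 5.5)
Your proposal is correct and follows essentially the same route as the paper. You assemble the morphism of extensions from \((\lambda_1,\iota_1)\), the unit of \(K\adj R\) and \(R(\chi)\), reduce injectivity of \(\phi_G\) to that of \(\phi_A\), and conclude by evaluating \(\phi_A(a)\) at \((1,e)\) to obtain \(\theta(e)\,a\,\theta(e)^{-1}\). The only difference is cosmetic: you invoke the monomorphism version of the short five lemma, whereas the paper notes that the left square is a pullback and that pulling back reflects monomorphisms in the protomodular category \(\PXMod\). Also, conjugation by \(\theta(e)\) is injective simply because it is an automorphism of \(G\), so your observation that \(\theta(e)\in A\) is true but not needed.
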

\begin{proof}
	We already explained the construction of this morphism before so we only have to check that it is a monomorphism to obtain the desired embedding.

	Since \(\PXMod\) is semi-abelian---being the category of internal groups in \(\RG\)---it is in particular protomodular, so pulling back reflects monomorphisms~\cite[Lemma~3.1.20]{BB04}. As the right-hand component of the morphism of extensions is the identity \(1_X\), the left square of this morphism is indeed a pullback~\cite[Lemma~4.2.4]{BB04} so it is enough to check that the first component \(\phi_A\colon A\to A^X\) is a monomorphism; the middle component \(\phi_G\) is then automatically monic as well.

	By construction, \(\phi_A=W(\chi)\comp \eta_1\comp \lambda_1\) is the composite displayed in the left-hand column of the diagram above; we check that it is injective. Let \(a\in A\). The morphism \(\lambda_1\) is the restriction to the kernels of the coproduct inclusion \(\lambda_2=\iota_1\colon G\to G+X\), so \(\lambda_1(a)\) is the element \((a,e)\) in the presentation of \cref{subsect - Ker in Grp}---recall that \(f(a)=e\) since \(a\in A=\Ker(f)\). Applying the unit \(\eta_1\), we obtain
	\[
		\eta_1\bigl(\lambda_1(a)\bigr)(u,x)=u_{KL(E)}\bigl(\lambda_1(a)\bigr)^x=x\cdot u_{KL(E)}\bigl(\lambda_1(a)\bigr)\cdot x^{-1}\in KL(E)
	\]
	for \((u,x)\in \{1,\sigma,\tau\}\times X\). Since \(\lambda_1\colon A\to KL(E)\) is a morphism of precrossed modules, it commutes with the unary operations, so \(u_{KL(E)}\bigl(\lambda_1(a)\bigr)=\lambda_1\bigl(u_A(a)\bigr)\). Concretely, as \(A=\Ker(f)\) is a kernel in \(\PXMod\), it is closed under the unary operations \(s_G\) and \(t_G\) (see \cref{thm - Higgins kernel distributive omega groups}), so \(u_G(a)=u_A(a)\) still belongs to \(A\); in particular \(f(u_A(a))=e\), and under the presentation \(\lambda_1(a)=(a,e)\) maps to \(\lambda_1(u_A(a))=(u_A(a),e)\), so the element above is \((u_A(a),x)\) in the presentation. Postcomposing with \(\chi\) (see \cref{prop:chi morphism}) finally gives
	\[
		\phi_A(a)(u,x)=\chi\bigl(u_A(a),x\bigr)=\theta(x)\cdot u_A(a)\cdot \theta(x)^{-1}\in A\text{.}
	\]
	Let us assume that the above expression is equal to the neutral element \(e_A\) for all \((u,x)\in \{1,\sigma,\tau\}\times X\), i.e.\ that \(\phi_A(a)\) is equal to the neutral element in \(A^X\). In particular, its component at \(u=1\) is the assignment \(a\mapsto \bigl(x\mapsto \theta(x)\, a\, \theta(x)^{-1}\bigr)\), which is precisely the first component of the classical Kaluzhnin--Krasner embedding in \(\Grp\). Evaluating it at \(x=e\) yields \(\theta(e)\, a\, \theta(e)^{-1}\); since \(A\) is normal in \(G\), conjugation by the fixed element \(\theta(e)\) is injective on \(A\). Hence \(\phi_A\) is a monomorphism, which concludes the proof.
\end{proof}

\section{Discussion about crossed modules}
The strategy used in the above section to express the embedding in \(\PXMod\) was to adapt all the steps made for the classical embedding in \(\Grp\). As already recalled before, we need a \LACC{} category to define the right adjoint \(R\). The strategy for \(\PXMod\) was the same as for \(\Grp\) since they are both \LACC{} for the same reason: they are the category of internal groups over a cartesian closed category with pullbacks.

In this section, we will consider \(\XMod\) which is also \LACC{} for the same argument: it is the category of internal groups in a cartesian closed category, \(\Cat\). Hence, we could try to follow the same procedure. However, we will face some major issues. Since the description of the power object of \(\Cat\) is classical and the construction of \(R\) follows directly from Gray's construction, we will only focus on the new results: given a regular epimorphism \(f\colon G\to X\) in \(\XMod\) with kernel \(A\), we will describe \(\Ker(\CoindArr{f\and 1_X})\) in \(\XMod\) and try to construct a morphism \(\chi\colon \Ker(\CoindArr{f\and 1_X})_{\XMod}\to A\).

To express those new results, we need to recall some basics results about the adjunction between \(\XMod\) and \(\PXMod\).

\subsection{Commutator theory and the adjunction between \(\PXMod\) and \(\XMod\)}\label{subsection - general results about XMod}

\subsubsection{The conditions \SH{} and \NH{}}
It is well known that, for a semi-abelian category \(\C\), \(\XMod(\C)\) is a full subcategory of \(\PXMod(\C)\). The description of the reflector relies on the following characterisation of an internal category over a finitely complete Mal'tsev category, and so, in particular, over a semi-abelian category: a reflexive graph \((G_0,G_1, d,c,\iota)\) admits an internal category structure if and only if \([\Eq(d), \Eq(c)]=\Delta_{G_1}\), where \(\Eq(d)\) and \(\Eq(c)\) are the respective kernel pairs of \(d\) and \(c\) and \(\Delta_{G_1}\) is the smallest equivalence relation on \(G_1\) (see for instance~\cite{BB04}). The commutator used here is called \emph{Smith's commutator}~\cite{S76,P95} and it is defined on equivalence relations.

When \(\C\) satisfy the ``\emph{Smith is Huq}'' (in short \SH{})~\cite{MVdL12} and ``\emph{normality of Higgins}'' (in short \NH{})~\cite{AlanThesis} conditions, such as the category of groups\footnote{It is important to point out that the two conditions \NH{} and \SH{} are independent of each other (see \cite[Section~5.3]{CGrayVdL1} for some examples).} (see~\cite{MLan98} for \SH{} and \cite{AlanThesis} for \NH{})\@, then a reflexive graph \((G_0,G_1, d,c,\iota)\) is an internal category if and only if \([\Ker(d),\Ker(c)]=0\), where \([\placeholder,\placeholder]\) is the Higgins commutator~\cite{MM10b}, i.e.\ the regular image of the composite
\[
	\begin{tikzcd}[column sep=huge]
		\Ker(d)\cosmash \Ker(c) \arrow[r, "\kappa_{\Ker(d),\Ker(c)}", nmono] & \Ker(d)+ \Ker(c)\arrow[r, "\CoindArr{\ker(d)\and \ker(c)}"] & G_1
	\end{tikzcd}
\]
where \((\Ker(d)\cosmash \Ker(c),\kappa_{\Ker(d),\Ker(c)})\) denotes the kernel of the canonical map from \(\Ker(d)+ \Ker(c)\) to \(\Ker(d)\times\Ker(c)\). In \(\Grp\), this latter commutator is the usual commutator of groups. Furthermore, we recall that, under \SH{} and \NH{}\@, since \(\Ker(d)\) and \(\Ker(c)\) are normal subobjects of \(G_1\), then so is \([\Ker(d),\Ker(c)]\). As a consequence, we can quotient with it in the following lemma:

\begin{lemma}
	\label{lemma:adjunction between XMod PXMod under SH with Huq}
	In a semi-abelian category \(\C\) satisfying \SH{}\@ and \NH{}\@, the forgetful functor \(\XMod(\C)\to \PXMod(\C)\) admits a left adjoint.
\end{lemma}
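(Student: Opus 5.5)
We construct the reflector explicitly by quotienting out the Higgins commutator of the two kernels, and verify its universal property. We work in the reflexive graph presentation of \(\PXMod(\C)\) and \(\XMod(\C)\).

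\textbf{Construction.} Let \(\mathbf{G}=(G_0,G_1,d,c,\iota)\) be an internal reflexive graph in \(\C\) and put \(N\coloneq[\Ker(d),\Ker(c)]\le G_1\). By the recalled consequence of \SH{} and \NH{}, \(N\) is a normal subobject of \(G_1\). Let \(q\colon G_1\to G_1/N\) be its cokernel.

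Next we check that \(N\le\Ker(d)\cap\Ker(c)\). Under \(d\), the composite \(\Ker(d)\cosmash\Ker(c)\to\Ker(d)+\Ker(c)\to G_1\to G_0\) factors through \(0\cosmash d(\Ker(c))=0\), since \(d\ker(d)=0\) and the cosmash product is functorial with \(0\cosmash Y=0\). Hence \(d\) kills the regular image \(N\), and symmetrically so does \(c\). The universal property of cokernels then yields \(\bar d,\bar c\colon G_1/N\to G_0\) with \(\bar dq=d\) and \(\bar cq=c\). Setting \(\bar\iota\coloneq q\iota\) gives \(\bar d\bar\iota=1=\bar c\bar\iota\), so \(\bar{\mathbf G}=(G_0,G_1/N,\bar d,\bar c,\bar\iota)\) is a reflexive graph and \((1_{G_0},q)\colon\mathbf G\to\bar{\mathbf G}\) is a morphism.

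\textbf{\(\bar{\mathbf G}\) is a crossed module.} By the characterisation valid under \SH{} and \NH{}, it suffices to show \([\Ker(\bar d),\Ker(\bar c)]=0\). Since \(q\) is a regular epimorphism and \(d=\bar dq\), we have \(q(\Ker d)=\Ker\bar d\). This follows from the fact that in a semi-abelian category the image of a kernel under a regular epimorphism \(q\) is the kernel of the induced map whenever \(\Ker q\le\Ker d\); the proof is a standard \(3\times3\)-lemma or Noether-isomorphism argument. The same holds for \(c\).

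Higgins commutators are preserved by regular epimorphisms: \(q([H,K])=[q(H),q(K)]\). This holds because \(q\cosmash q\) restricted appropriately is a regular epimorphism in a semi-abelian category, and images compose. Therefore
\[
	[\Ker\bar d,\Ker\bar c]=q(N)=0.
\]
This step, together with the identification \(q(\Ker d)=\Ker\bar d\), is the main technical point, and we would cite the standard properties of Higgins commutators from \cite{MM10b} for it.

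\textbf{Universal property.} Let \((h_0,h_1)\colon\mathbf G\to\mathbf H\) be a morphism with \(\mathbf H\) a crossed module. Then \(h_1\) maps \(\Ker d\) into \(\Ker d^H\) and \(\Ker c\) into \(\Ker c^H\), so by functoriality of the commutator
\[
	h_1(N)\le[\Ker d^H,\Ker c^H]=0.
\]
Hence \(h_1\) factors uniquely as \(\bar h_1q\). The compatibilities \(d^H\bar h_1=h_0\bar d\), \(c^H\bar h_1=h_0\bar c\) and \(\bar h_1\bar\iota=\iota^Hh_0\) follow by precomposing with the epimorphism \(q\). Uniqueness of \((h_0,\bar h_1)\) holds because \(q\) is epic.

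Thus \(\mathbf G\mapsto\bar{\mathbf G}\) defines a left adjoint to the full inclusion \(\XMod(\C)\to\PXMod(\C)\), with unit \((1_{G_0},q)\).
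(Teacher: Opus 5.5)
Your proposal is correct and follows the paper's proof: the same quotient \(G_1/[\Ker(d),\Ker(c)]\) with unit \((1_{G_0},q)\), and the same universal-property argument that a morphism into a crossed module kills the commutator. Your appeal to functoriality of the commutator is exactly the paper's naturality computation with \(\bar g_1^d\cosmash\bar g_1^c\), and you additionally justify \([\Ker\bar d,\Ker\bar c]=0\) via \(q(\Ker d)=\Ker\bar d\) and the preservation of Higgins commutators along regular epimorphisms, a step the paper only asserts.
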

\begin{proof}
	Using the equivalences of category, we will prove that the forgetful functor \(U\colon \Grpd(\C)\to \RG(\C)\) admits a left adjoint \(L\). Let \((G_0,G_1,d,c,\iota)\) a reflexive graph and consider the diagram
	\begin{equation}
		\label{diag-adjunction SH non variety}
		\begin{tikzcd}
			G_1 \arrow[r, "d", shift left=2] \arrow[r, "c"', shift right=2] \arrow[d, "q"', repi]                           & G_0 \arrow[l, "\iota" description] \arrow[d, equal] \\
			\frac{G_1}{[\Ker(d),\Ker(c)]} \arrow[r, "\overline{d}", shift left=2] \arrow[r, "\overline{c}"', shift right=2] & G_0 \arrow[l, "q \iota" description]
		\end{tikzcd}
	\end{equation}
	where \(\overline{d}\) is such that \(d=\overline{d}q\) and, similarly, \(\overline{c}\) is such that \(c=\overline{c}q\). Indeed, we can prove that since \(\Ker(d)\), \(\Ker(c)\) are normal in \(G_1\), then \([\Ker(d),\Ker(c)]\leq \Ker(d)\) and \([\Ker(d),\Ker(c)]\leq \Ker(c)\). This diagram defines a morphism of reflexive graphs since its bottom part is a reflexive graph. In addition, this bottom reflexive graph is an internal groupoid since \({[\Ker(\overline{d}), \Ker(\overline{c})]=0}\).

	Moreover, we can see that the above vertical morphism \((1_{G_0},q)\) defines the \((G_0,G_1,d,c,\iota)\)-component of the unit of the adjunction.

	As a consequence, we can easily prove the adjunction: let \((g_0,g_1)\colon (G_0,G_1,d,c,\iota)\to (X_0,X_1,d^X,c^X,\iota^X)\) be a morphism of reflexive graphs with an internal groupoid as codomain. Since \((g_0,g_1)\) determines a morphism of reflexive graphs, there exists unique morphisms \(\overline{g_1}^d\) and \(\overline{g_1}^c\) such that
	\[g_1\ker(d)=\ker(d^X)\overline{g_1}^d\quad \text{and} \quad g_1\ker(c)=\ker(c^X)\overline{g_1}^c.\]

	If we can prove that \(g_1\ker(q)=0\) then the proof is complete since the universal morphism of internal groupoid will be \((g_0,\overline{g_1})\) where \(\overline{g_1}\) is the unique morphism such that \(g_1=\overline{g_1}q\).
	However, if \(e\) denotes the regular epimorphism part of the image factorisation of \(\CoindArr{\ker(d)\and \ker(c)}\kappa_{\Ker(d),\Ker(c)}\) then it is clear that it suffice to prove that \(g_1\ker(q)e=0\):
	\begin{align*}
		g_1\ker(q)e & = g_1\CoindArr{\ker(d)\and \ker(c)}\kappa_{\Ker(d),\Ker(c)}= \CoindArr{\ker(d^X)\and \ker(c^X)}(\overline{g_1}^d + \overline{g_1}^c)\kappa_{\Ker(d),\Ker(c)} \\
		            & = \CoindArr{\ker(d^X)\and \ker(c^X)}\kappa_{\Ker(d^X),\Ker(c^X)}(\overline{g_1}^d \cosmash \overline{g_1}^c)
	\end{align*}
	where, since \((X_0,X_1,d^X,c^X,\iota^X)\) is an internal groupoid, the commutator \([\Ker(d^X),\Ker(c^X)]\) is trivial, i.e.\ \(\CoindArr{\ker(d^X)\and \ker(c^X)}\kappa_{\Ker(d^X),\Ker(c^X)}=0\).
\end{proof}

\begin{remark}
	\label{rem:ajunction with s and t}
	In the context of a semi-abelian variety \(\V\) satisfying \SH{} and \NH{}\@, by \cref{lemma - RG(V) variety} the previous diagram~\eqref{diag-adjunction SH non variety} may be reformulated as
	\[
		\begin{tikzcd}
			X \arrow[r, "s", shift left] \arrow[r, "t"', shift right] \arrow[d, "q"', repi]                           & X \arrow[d, "q", repi]      \\
			\frac{X}{[\Ker(s),\Ker(t)]} \arrow[r, "\overline{s}", shift left] \arrow[r, "\overline{t}"', shift right] & \frac{X}{[\Ker(s),\Ker(t)]}
		\end{tikzcd}
	\]
	where \(\overline{s}\eqdef q\iota \overline{d}\) and \(\overline{t}\eqdef q\iota\overline{c}\), which implies in particular that all the squares commute.
\end{remark}

\begin{corollary}
	\label{cor - reflexive graph with SH and NH}
	If \(\V\) is a semi-abelian variety satisfying \SH{}\@ and \NH{}\@, a precrossed module \((G,s,t)\) is a crossed module if and only if \([\Ker(s),\Ker(t)]=0\).\noproof
\end{corollary}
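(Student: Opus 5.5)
The corollary should follow by combining two facts already in place. The first is the equivalence of \cref{lemma - RG(V) variety} between reflexive graphs in \(\V\) and triples \((G,s,t)\). The second is the characterisation recalled just before \cref{lemma:adjunction between XMod PXMod under SH with Huq}: under \SH{} and \NH{}, a reflexive graph \((G_0,G_1,d,c,\iota)\) is an internal category, equivalently an internal groupoid since \(\V\) is Mal'tsev, if and only if \([\Ker(d),\Ker(c)]=0\). Since \(\XMod(\V)\) corresponds to internal groupoids (categories) under the equivalence \(\PXMod(\V)\Equiv\RG(\V)\), it suffices to show that for the reflexive graph attached to \((G,s,t)\) we have \(\Ker(d)=\Ker(s)\) and \(\Ker(c)=\Ker(t)\) as normal subobjects of \(G=G_1\).

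The key step is the comparison of kernels. Under the equivalence, \(s=\iota d\) and \(t=\iota c\) with \(\iota\) a monomorphism (it is split by \(d\)). Because \(\iota\) is monic, \(\iota d(x)=e\) if and only if \(d(x)=e\), so \(\Ker(s)=\Ker(\iota d)=\Ker(d)\), and similarly \(\Ker(t)=\Ker(c)\). In the converse direction of \cref{lemma - RG(V) variety}, where \(d\) and \(c\) are the regular epi parts of \(s\) and \(t\) and \(\iota\) is the common mono part, the same argument applies. The two Higgins commutators are then computed from the same pair of normal subobjects of the same object \(G\), so they coincide: \([\Ker(s),\Ker(t)]=[\Ker(d),\Ker(c)]\).

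Putting this together, \((G,s,t)\) is a crossed module if and only if its reflexive graph is an internal category (groupoid), if and only if \([\Ker(d),\Ker(c)]=0\), if and only if \([\Ker(s),\Ker(t)]=0\).

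The only point needing care is checking that the equivalence \(\PXMod(\V)\Equiv\RG(\V)\) restricts to \(\XMod(\V)\Equiv\Grpd(\V)\), equivalently \(\Cat(\V)\). This is the classical result of Janelidze, the Peiffer condition \PFF{} matching the existence of a composition. It can be cited as in the proof of \cref{lemma:adjunction between XMod PXMod under SH with Huq}, which already works through this equivalence, so I expect no real obstacle.
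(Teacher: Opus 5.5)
Your proposal is correct and follows essentially the same route the paper intends; the paper leaves this corollary without proof. You combine the recalled characterisation, that under \SH{} and \NH{} a reflexive graph is an internal category if and only if \([\Ker(d),\Ker(c)]=0\), with the equivalence of \cref{lemma - RG(V) variety}, where \(s=\iota d\), \(t=\iota c\) and \(\iota\) monic give \(\Ker(s)=\Ker(d)\) and \(\Ker(t)=\Ker(c)\). This is the same identification the paper makes implicitly in \cref{rem:ajunction with s and t}, where it writes the reflector as the quotient by \([\Ker(s),\Ker(t)]\).
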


If we combine the previous lemma with \cref{cor: coproducts in PXMod} for a given semi-abelian variety \(\V\) satisfying \SH{} and \NH{}\@, then \(\XMod(\V)\) admits binary coproducts:
\begin{lemma}
	\label{lemma: binary coproduct in XMod}
	Let \(\V\) a semi-abelian variety satisfying \SH{} and \NH{}\@. Consider two crossed modules \((G_1,s_1,t_1)\) and \((G_2,s_2,t_2)\). Their binary coproduct in \(\XMod(\V)\) exists and is given by
	\[
		\Biggl(\frac{G_1+_{\V} G_2}{[\Ker(s_1+_{\V} s_2) , \Ker(t_1+_{\V} t_2)]} \:,\: \overline{s_1+_{\V} s_2} \:,\: \overline{t_1+_{\V} t_2}\Biggr)
	\]
	where the unary operations are described in the remark below and the inclusions are the compositions of the inclusion in \(\V\) followed by the quotient \(q\).\noproof
\end{lemma}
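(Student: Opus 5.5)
The plan is to obtain the coproduct in \(\XMod(\V)\) by reflecting the coproduct in \(\PXMod(\V)\) along the left adjoint of \cref{lemma:adjunction between XMod PXMod under SH with Huq}. Since \(\XMod(\V)\) is a full reflective subcategory of \(\PXMod(\V)\), the left adjoint preserves colimits. Moreover, the inclusion is fully faithful, so the counit is invertible and the reflection of a crossed module is (isomorphic to) itself. Hence, for crossed modules \(G_1\) and \(G_2\), the reflection of their coproduct in \(\PXMod(\V)\), with the reflected inclusions, is their coproduct in \(\XMod(\V)\). This is the general fact that a reflective subcategory is closed under the colimits existing in the ambient category, computed by reflecting. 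I would either cite it or check the universal property directly.

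First, by \cref{cor: coproducts in PXMod}, the coproduct of \((G_1,s_1,t_1)\) and \((G_2,s_2,t_2)\) in \(\PXMod(\V)\) is \((G_1+_{\V}G_2,\ s_1+_{\V}s_2,\ t_1+_{\V}t_2)\), with the inclusions of \(\V\). Second, I apply the reflector described in the proof of \cref{lemma:adjunction between XMod PXMod under SH with Huq}, in the unary-operation form of \cref{rem:ajunction with s and t}. It quotients the underlying object by \([\Ker(s_1+_{\V}s_2),\Ker(t_1+_{\V}t_2)]\) via \(q\) and induces the operations \(\overline{s_1+_{\V}s_2}\) and \(\overline{t_1+_{\V}t_2}\). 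Here one should check that \(\Ker(s)\) for the graph description coincides with \(\Ker(d)\), since \(s=\iota d\) with \(\iota\) monic, and likewise \(\Ker(t)=\Ker(c)\), so the two commutators agree. By \cref{cor - reflexive graph with SH and NH} the quotient is a crossed module, and the coproduct inclusions become \(q\) composed with the \(\V\)-inclusions.

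Third, I verify the universal property directly, to avoid relying only on the abstract argument. Take a crossed module \(Y\) and morphisms \(g_i\colon G_i\to Y\) in \(\XMod(\V)\). These are morphisms in \(\PXMod(\V)\), since \(\XMod\) is full, so they induce a unique \(\CoindArr{g_1\and g_2}\colon G_1+G_2\to Y\) in \(\PXMod(\V)\). By the universal property of the unit \(q\), established in the proof of the lemma by showing that \(g_1\ker(q)=0\), this map factors uniquely through \(q\). Uniqueness of the induced arrow then follows because \(q\) is an epimorphism.

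The only real obstacle is the bookkeeping in the second step: making sure the commutator computed from the endomorphism description of \cref{rem:ajunction with s and t} is the one of the graph description, and that the induced operations \(\overline{s}\) and \(\overline{t}\) are well defined and are the unique ones making \(q\) a morphism. Well-definedness follows from \([\Ker(s),\Ker(t)]\le\Ker(s)\cap\Ker(t)\), shown in that proof, which gives \(q s\) and \(q t\) vanishing on \(\ker q\). Uniqueness follows from \(q\) being a regular epimorphism.
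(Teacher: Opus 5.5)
Your proposal is correct and follows the paper's route: the paper likewise gets the lemma by applying the reflector of \cref{lemma:adjunction between XMod PXMod under SH with Huq} (in the unary form of \cref{rem:ajunction with s and t}) to the \(\PXMod(\V)\)-coproduct of \cref{cor: coproducts in PXMod}, and your direct factorisation of \(\CoindArr{g_1\and g_2}\) through \(q\) is the general counterpart of the element-wise check the paper does for groups in \cref{remark: induced morphism for the binary coproduct in XMod}. One wording slip: a reflective subcategory is closed under ambient \emph{limits}, not colimits (here \(G_1+_{\V}G_2\) is typically not a crossed module, which is the point of the lemma); what you actually use is that it has every colimit the ambient category has, computed by reflecting the ambient one.
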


If the underlying semi-abelian variety of algebras \(\V\) is clear, then we will omit the index in the construction of the binary coproducts in \(\XMod(\V)\).

\begin{remark}
	\label{rem:induced morphism for coproducts for XMod in Grp}
	The unary operations \(\overline{s_1+ s_2}\) and \(\overline{t_1+ t_2}\) are the unique morphisms such that all the squares of
	\[
		\begin{tikzcd}
			G_1+_{\V} G_2 \arrow[r, "s_1+ s_2", shift left] \arrow[r, "t_1+ t_2"', shift right] \arrow[d, "q"', repi]                                      & G_1+ G_2 \arrow[d, "q", repi]                      \\
			\frac{G_1+ G_2}{[\Ker(s_1+ s_2) , \Ker(t_1+ t_2)]} \arrow[r, "\overline{s_1+ s_2}", shift left] \arrow[r, "\overline{t_1+ t_2}"', shift right] & \frac{G_1+ G_2}{[\Ker(s_1+ s_2) , \Ker(t_1+ t_2)]}
		\end{tikzcd}
	\]
	commute.

	For instance in the category of groups, for \(g_1\in G_1\) and \(g_2\in G_2\), we have
	\[
		\begin{split}
			(\overline{s_1+ s_2})((g_1g_2)[\Ker(s_1+ s_2) , \Ker(t_1+ t_2)]) & = (s_1(g_1)s_2(g_2))[\Ker(s_1+ s_2) , \Ker(t_1+ t_2)]  \\
			(\overline{t_1+ t_2})((g_1g_2)[\Ker(s_1+ s_2) , \Ker(t_1+ t_2)]) & = (t_1(g_1)t_2(g_2))[\Ker(s_1+ s_2) , \Ker(t_1+ t_2)].
		\end{split}
	\]
\end{remark}

\begin{remark}[Reformulation of the binary coproducts in \(\XMod(\V)\)]
	\label{rem - reformulation binary coproducts in XMod with classical def}
	As for \(\PXMod\), since the construction of the Kaluzhnin--Krasner embedding requires the construction of the right adjoint \(R\colon \XMod\to \SSES_X(\XMod)\), we need to consider the power object in \(\Cat\) (the category of small categories) to follow Gray's construction. Moreover, as done in \cref{subsubsec - power object in PXMod}, we endow it with a group structure and an action.

	We reformulate \cref{lemma: binary coproduct in XMod}: given two crossed modules \((X_0,X_1, d^X, c^X, \iota^X)\) and \((G_0,G_1,d^G,c^G,\iota^G)\) in a semi-abelian variety \(\V\) satisfying \SH{} and \NH{}\@, their binary coproduct is given by the bottom reflexive graph in the diagram
	\[
		\begin{tikzcd}[column sep=large]
			G_1+X_1 \arrow[rr, "d^G+d^X", shift left=2] \arrow[rr, "c^G+c^X"', shift right=2] \arrow[d, "q"', repi]                                       &  & G_0+X_0 \arrow[ll, "\iota^G+\iota^X" description] \arrow[d, equal] \\
			\frac{G_1+X_1}{[\Ker(d^G+d^X),\Ker(c^G+c^X)]} \arrow[rr, "\overline{d^G+d^X}", shift left=2] \arrow[rr, "\overline{c^G+c^X}"', shift right=2] &  & G_0+X_0 \arrow[ll, "q (\iota^G+\iota^X)" description]
		\end{tikzcd}.
	\]
\end{remark}

\subsubsection{The category of groups satisfies \SH{} and \NH{}\@: applications}
We are now focusing on the category \(\Grp\) which is a semi-abelian variety that satisfies \SH{} and \NH{}\@.

The first important results concern the understanding of \(\Ker(s)\) and \(\Ker(t)\). Indeed, as pointed out in \cref{cor - reflexive graph with SH and NH}, those kernels are used to detect whether a given precrossed module is a crossed module or not.

We explicitly describe \(\Ker(s)\) and \(\Ker(t)\) in \(\Grp\): they are, respectively, the subgroups of \(G_1\)
\[
	\{xs(x)^{-1} \mid x\in G_1\} \qquad\text{and}\qquad \{yt(y)^{-1} \mid y\in G_1\}\text{.}
\]
Note that these sets are the subgroups themselves, not generating sets.

Since the proof is symmetric, we will only discuss \(\Ker(s)\). Let \(x\in G_1\). If \(x\in \Ker(s)\), then \(xs(x)^{-1}=x\) and otherwise we have
\[
	s(xs(x)^{-1}) = s(x) s(s(x)^{-1})=s(x)s(x)^{-1}=e
\]
where we use \cref{rem:s and t idempotent}. Hence it is clear that \(\{xs(x)^{-1} \mid x\in G_1\}\) is the kernel of \(s\).

\begin{remark}[Induced morphisms by the binary coproducts in \(\XMod\)]
	\label{remark: induced morphism for the binary coproduct in XMod}
	In \cref{subsubsection KLE in XMod}, we will describe the kernel of a morphism out of the binary coproduct as done for \(\PXMod\) with \cref{lemma: kernel in PXMod}. Therefore, we will explain how to understand its construction in \(\XMod\) based on \cref{lemma: binary coproduct in XMod}.

	Let \(g_1\colon (G_1,s_1,t_1)\to (X,s_X,t_X)\) and \(g_2\colon (G_2,s_2,t_2)\to (X,s_X,t_X)\) be two crossed module morphisms, and consider the following diagram in \(\Grp\).
	\[
		\begin{tikzcd}
			& G_1+G_2\arrow[rrr, "q", repi]\arrow[dd, "\CoindArr{g_1\and g_2}", induced] &                                            &                                             & {\frac{G_1+G_2}{[\Ker(s_1+s_2), \Ker(t_1+t_2)]}}\arrow[dd,"\overline{\CoindArr{g_1\and g_2}}", induced]                                               \\
			G_1\arrow[ru, "\iota_1"]\arrow[dr, "g_1"'] &                                                                            & G_2\arrow[lu, "\iota_2"']\arrow[ld, "g_2"] & G_1\arrow[ru, "q\iota_1"]\arrow[dr, "g_1"'] &                                                                                                         & G_2\arrow[lu, "q\iota_2"']\arrow[ld, "g_2"] \\
			& X\arrow[rrr, equal]                                                        &                                            &                                             & X
		\end{tikzcd}
	\]

	Let us recall that since \(g_1\) and \(g_2\) are morphisms of (pre)crossed modules (i.e.\ compatible with the unary operations) then so is \(\CoindArr{g_1\and g_2}\). This implies in particular that \(\CoindArr{g_1\and g_2}\bigl([\Ker(s_1+s_2), \Ker(t_1+t_2)]\bigr)\in [\Ker(s_X),\Ker(t_X)]=\{e_X\}\). Indeed, given \(a\), \(b\in G_1\) and \(a'\), \(b'\in G_2\) then
	\[
		\begin{split}
			\CoindArr{g_1\and g_2} & \bigl((aa'(s_1(a)s_2(a'))^{-1})(bb'(t_1(b)t_2(b'))^{-1})(s_1(a)s_2(a'))(aa')^{-1}(t_1(b)t_2(b'))(bb')^{-1}\bigr) \\
			                       & = (g_1(a)g_2(a')(g_1(s_1(a))g_2(s_2(a')))^{-1})(g_1(b)g_2(b')(g_1(t_1(b))g_2(t_2(b')))^{-1}                      \\
			                       & \qquad \cdot (g_1(s_1(a))g_2(s_2(a')))(g_1(a)g_2(a'))^{-1}(g_1(t_1(b))g_2(t_2(b')))(g_1(b)g_2(b'))^{-1}          \\
			                       & = (g_1(a)g_2(a')(s_X(g_1(a))s_X(g_2(a')))^{-1})(g_1(b)g_2(b')(t_X(g_1(b))t_X(g_2(b')))^{-1}                      \\
			                       & \qquad \cdot (s_X(g_1(a))s_X(g_2(a')))(g_1(a)g_2(a'))^{-1}(t_X(g_1(b))t_X(g_2(b')))(g_1(b)g_2(b'))^{-1}          \\
			                       & = xs_X(x)^{-1}y t_X(y)^{-1}s_X(x)x^{-1}s_X(y)y^{-1}\in [\Ker(s_X),\Ker(t_X)]
		\end{split}
	\]
	where \(x=g_1(a)g_2(a')\in X\) and \(y=g_1(b)g_2(b')\in X\).

	As a result, we have an induced morphism \(\overline{\CoindArr{g_1\and g_2}}\) in \(\Grp\) such that \(\CoindArr{g_1\and g_2}=\overline{\CoindArr{g_1\and g_2}}q\). Therefore, this induced morphism is compatible with the unary operations since so is \(\CoindArr{g_1\and g_2}\).

	Finally, it is not surprising that the above construction defines the induced morphism in \(\XMod\): \(\overline{\CoindArr{g_1\and g_2}}\) is the image of \(\CoindArr{g_1\and g_2}\) through the reflector described in \cref{rem:ajunction with s and t}. Moreover, \(\overline{\CoindArr{g_1\and g_2}}\) is the universal morphism induced by the binary coproduct in \(\XMod\).

	In addition, this construction induces a morphism of short exact sequences as below.
	\[
		\begin{tikzcd}
			{{[\Ker(s_1+s_2), \Ker(t_1+t_2)]}}\arrow[r, nmono] \arrow[d, induced] & G_1+G_2 \arrow[r, "q", repi]\arrow[d, "\CoindArr{g_1\and g_2}"] & {\frac{G_1+G_2}{[\Ker(s_1+s_2), \Ker(t_1+t_2)]}}\arrow[d, "\overline{\CoindArr{g_1\and g_2}}"] \\
			\{e_X\}\arrow[r, nmono]                                               & X\arrow[r, equal]                                               & X
		\end{tikzcd}
	\]
\end{remark}

\subsection{The kernel \texorpdfstring{\(KL(E)\)}{KL(E)} in crossed modules}\label{subsubsection KLE in XMod}
After describing the kernel in \(\PXMod\) (see \cref{lemma: kernel in PXMod}), let us consider its subvariety \(\XMod\). Let \((G,s_G,t_G)\) and \((X,s_X,t_X)\) be two crossed modules. Unlike the description of the binary coproducts in \(\XMod\) (see \cref{lemma: binary coproduct in XMod}), the group underlying the kernel in \(\XMod\) is now a quotient of the one in \(\Grp\): it is the quotient of \(\Ker(\CoindArr{f\and 1_X})_{\Grp}\) by the commutator \([\Ker(s_G+s_X),\Ker(t_G+t_X)]\). Let us first show that this commutator indeed lies in \(\Ker(\CoindArr{f\and 1_X})_{\Grp}\). Consider the element
\begin{equation}\label{eq - a word in the commutator lies in the kernel}
	gs_X(f(g))^{-1}ht_X(f(h))^{-1}s_G(g)f(g)^{-1}t_G(h)f(h)^{-1}\in [\Ker(s_G+s_X),\Ker(t_G+t_X)]
\end{equation}
where \(g\), \(h\in G\). We can decompose it as a concatenation of words belonging to \(\Ker(\CoindArr{f\and 1_X})_{\Grp}\). More precisely, we can express it via the generators
\[
	xg'f(g')^{-1}x^{-1}
\]
for \(x\in X\) and \(g'\in G\).

It is enough to observe that
\[
	yf(h)h^{-1}y^{-1}=\bigl(yhf(h)^{-1}y^{-1}\bigr)^{-1}
\]
where \(y\in X\) and \(h\in G\). Therefore, we have our claim by observing that for \(g\in G\) and \(x\in X\), we have
\begin{align*}
	gx & =\bigl(gf(g)^{-1}\bigr) f(g)x       &  & \text{(\(X\) component on the right and a generator on the left)}  \\
	   & =f(g)x\bigl(x^{-1}f(g)^{-1}gx\bigr) &  & \text{(\(X\) component on the left and a generator on the right)}  \\
	xg & =xf(g)\bigl(f(g)^{-1}g\bigr)        &  & \text{(\(X\) component on the left and a generator on the right)}  \\
	   & =\bigl(xgf(g)^{-1}x^{-1}\bigr)xf(g) &  & \text{(\(X\) component on the right and a generator on the left).}
\end{align*}

Therefore the element~\eqref{eq - a word in the commutator lies in the kernel} can be written as
\[
	\begin{split}
		gs_X(f(g))^{-1}ht_X(f(h))^{-1}s_G(g)f(g)^{-1}t_G(h)f(h)^{-1} & = \omega_1 x_1ht_X(f(h))^{-1}s_G(g)f(g)^{-1}t_G(h)f(h)^{-1}              \\
		                                                             & =\omega_1 \omega_2 x_2 s_G(g)f(g)^{-1}t_G(h)f(h)^{-1}                    \\
		                                                             & =\omega_1 \omega_2 \omega_3 x_3 t_G(h)f(h)^{-1}                          \\
		                                                             & =\omega_1 \omega_2\omega_3 \omega_4 x_3 f(t_G(h))f(h)^{-1}               \\
		                                                             & = \omega_1\omega_2 \omega_3 \omega_4 \in \Ker(\CoindArr{f\and 1_X})_\Grp
	\end{split}
\]
where
\begin{itemize}
	\item \(\omega_1=gf(g)^{-1}\in \Ker(\CoindArr{f\and 1_X})_\Grp\) and \(x_1=f(g)s_X(f(g))^{-1}\in X\);
	\item \(\omega_2=x_1hf(h)^{-1}x_1^{-1} \in \Ker(\CoindArr{f\and 1_X})_\Grp\) and \(x_2=x_1f(h)t_X(f(h))^{-1}\in X\);
	\item \(\omega_3=x_2s_G(g)f(s_G(g))^{-1}x_2^{-1}\in \Ker(\CoindArr{f\and 1_X})_\Grp\) and \(x_3=x_2f(s_G(g))f(g)^{-1}\in X\);
	\item \(\omega_4=x_3t_G(h)f(t_G(h))^{-1}x_3^{-1}\in \Ker(\CoindArr{f\and 1_X})_\Grp\) and
	      \[
		      \begin{split}
			      x_3 f(t_G(h))f(h)^{-1} & = x_2 s_X(f(g))f(g)^{-1}t_X(f(h))f(h)^{-1}                                 \\
			                             & = x_1f(h)t_X(f(h))^{-1}s_X(f(g))f(g)^{-1}t_X(f(h))f(h)^{-1}                \\
			                             & = f(g)s_X(f(g))^{-1}f(h)t_X(f(h))^{-1}s_X(f(g))f(g)^{-1}t_X(f(h))f(h)^{-1}
		      \end{split}
	      \]
	      hence, \(x_3 f(t_G(h))f(h)^{-1}\in [\Ker(s_X),\Ker(t_X)]=\{ e_X\}\) since \((X,s_X,t_X)\) is a crossed module.
\end{itemize}

The previous observations lead to the description of the kernel of the induced morphism \(\CoindArr{f\and 1_X}\) in \(\XMod\):

\begin{theorem}
	\label{thm - kernel in XMod final version}
	Let \((G,s_G,t_G)\) and \((X,s_X,t_X)\) be two crossed modules, and consider their binary coproduct
	\[
		\Biggl(G+_{\XMod}X \:,\: \overline{s_G+ s_X} \:,\: \overline{t_G+ t_X}\Biggr)\text{.}
	\]
	For a morphism \(f\colon G\to X\) of crossed modules, the kernel of \(\overline{\CoindArr{f\and 1_X}}\) (see \cref{rem:induced morphism for coproducts for XMod in Grp} for the notation) in \(\XMod\) is the crossed module
	\[
		\Biggl(\frac{\Ker(\CoindArr{f\and 1_X})_\Grp}{[\Ker(s_G+ s_X) , \Ker(t_G+ t_X)]} \:,\: s_\XMod \:,\: t_\XMod\Biggr)
	\]
	where \(\Ker(\CoindArr{f\and 1_X})_\Grp\) denotes the kernel in groups and where \(s_\XMod\) and \(t_\XMod\) denote, respectively, the quotient morphisms induced by \(\overline{s_G+ s_X}\) and \(\overline{t_G+ t_X}\) to the kernel \(\frac{\Ker(\CoindArr{f\and 1_X})_\Grp}{[\Ker(s_G+ s_X) , \Ker(t_G+ t_X)]}\).
\end{theorem}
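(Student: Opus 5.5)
The plan is to compute the kernel of \(\overline{\CoindArr{f\and 1_X}}\) in \(\Grp\) first and then to argue that it carries the stated crossed module structure. Write \(N\eqdef[\Ker(s_G+s_X),\Ker(t_G+t_X)]\) and let \(q\colon G+X\to (G+X)/N = G+_{\XMod}X\) be the quotient, as in \cref{lemma: binary coproduct in XMod}. By \cref{remark: induced morphism for the binary coproduct in XMod}, \(\CoindArr{f\and 1_X}=\overline{\CoindArr{f\and 1_X}}\comp q\), and \(N\subseteq \Ker(\CoindArr{f\and 1_X})_\Grp\). This inclusion is also exactly what the computation just before the statement establishes on the generating commutators \eqref{eq - a word in the commutator lies in the kernel}; since \(\Ker(\CoindArr{f\and 1_X})_\Grp\) is a normal subgroup, containing the generators of \(N\) suffices.

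The main step is a group-theoretic correspondence. For a surjection \(q\) with kernel \(N\) and any \(\beta\) with \(\alpha=\beta q\), we have \(\Ker(\beta)=q(\Ker(\alpha))\iso \Ker(\alpha)/N\), because \(N\subseteq\Ker(\alpha)\). Indeed, \(q(\Ker\alpha)\subseteq\Ker\beta\) is immediate. Conversely, if \(\beta(q(w))=e\), then \(\alpha(w)=e\), so \(\Ker\beta\subseteq q(\Ker\alpha)\). The restriction of \(q\) to \(\Ker\alpha\) has kernel \(N\cap\Ker\alpha=N\). Applying this with \(\alpha=\CoindArr{f\and 1_X}\) and \(\beta=\overline{\CoindArr{f\and 1_X}}\) gives the underlying group.

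Next we transfer the structure. Since \(\XMod\) is a full subcategory of \(\PXMod\) closed under subobjects (a subobject of a crossed module again has trivial commutator \([\Ker(s),\Ker(t)]\), by \cref{cor - reflexive graph with SH and NH}), the kernel in \(\XMod\) of a morphism between crossed modules is its kernel in \(\PXMod\). In turn, that kernel is the group kernel equipped with the restricted unary operations, exactly as in the proof of \cref{lemma: kernel in PXMod}, by \cref{thm - Higgins kernel distributive omega groups}. We then check that \(\overline{s_G+s_X}\) and \(\overline{t_G+t_X}\) restrict to \(q(\Ker\alpha)\). This holds because \(s_G+s_X\) preserves \(\Ker\alpha\), since \(\alpha\) commutes with the unary operations, and the squares of \cref{rem:induced morphism for coproducts for XMod in Grp} commute. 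Under the isomorphism \(q(\Ker\alpha)\iso\Ker(\alpha)/N\), these restrictions are precisely the induced quotient morphisms \(s_\XMod\) and \(t_\XMod\).

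I expect the main obstacle to be justifying this last identification rigorously, that is, that the kernel in \(\XMod\) needs no further quotient. The first thing I would try is to verify directly that \([\Ker(s_\XMod),\Ker(t_\XMod)]\) is trivial, since it embeds into \([\Ker(\overline{s_G+s_X}),\Ker(\overline{t_G+t_X})]=0\). This is the subobject-closure argument, but it should be stated carefully with the explicit group descriptions \(\Ker(s)=\{xs(x)^{-1}\}\).
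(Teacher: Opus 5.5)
Your proposal is correct and follows essentially the paper's route. The factorisation \(\CoindArr{f\and 1_X}=\overline{\CoindArr{f\and 1_X}}\comp q\) gives \(N\subseteq\Ker(\CoindArr{f\and 1_X})_\Grp\), as in the paper's first lemma, and the isomorphism theorems then identify the kernel of \(\overline{\CoindArr{f\and 1_X}}\) with \(\Ker(\CoindArr{f\and 1_X})_\Grp/N\), as in its second lemma. Your elementwise identification \(\Ker\beta=q(\Ker\alpha)\) is tighter than the paper's chain of abstract isomorphisms, and your subobject-closure argument makes explicit the \(\XMod\)-structure on the kernel, which the paper leaves implicit.
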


The proof of this theorem is split into the following lemmas. Therefore, we will use the same assumptions and the same notations until the end of the section.

\begin{lemma}
	\label{lemma - commutator in a normal subgroup of Ker}
	The commutator \([\Ker(s_G+s_X), \Ker(t_G+t_X)]\) is a normal subgroup of \(\Ker(\CoindArr{f\and 1_X})\).
\end{lemma}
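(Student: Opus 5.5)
We need two things: the commutator \(C\eqdef[\Ker(s_G+s_X),\Ker(t_G+t_X)]\) is contained in \(\Ker(\CoindArr{f\and 1_X})_\Grp\), and it is normal there. Normality is the easy half. Since \(\Grp\) satisfies \NH{}, the Higgins commutator of two normal subgroups of \(G+X\) is normal in \(G+X\). Concretely, \(\Ker(s_G+s_X)\) and \(\Ker(t_G+t_X)\) are kernels, hence normal in \(G+X\), and the group commutator of two normal subgroups is normal. So once \(C\leq \Ker(\CoindArr{f\and 1_X})_\Grp\) is known, \(C\) is normal in the smaller group \(\Ker(\CoindArr{f\and 1_X})_\Grp\) as well.

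\textbf{Containment.} Rather than the word-by-word decomposition carried out above, I would use the same argument as in \cref{remark: induced morphism for the binary coproduct in XMod}, applied to \(g_1=f\) and \(g_2=1_X\). Since \(f\) and \(1_X\) are morphisms of precrossed modules, \(\CoindArr{f\and 1_X}\colon (G+X,s_G+s_X,t_G+t_X)\to (X,s_X,t_X)\) commutes with the unary operations. It therefore sends \(\Ker(s_G+s_X)\) into \(\Ker(s_X)\) and \(\Ker(t_G+t_X)\) into \(\Ker(t_X)\). A group homomorphism maps the commutator subgroup \([N,M]\) onto \([\varphi(N),\varphi(M)]\), so
\[
	\CoindArr{f\and 1_X}(C)\subseteq[\Ker(s_X),\Ker(t_X)]=\{e_X\}\text{,}
\]
where the last equality is \cref{cor - reflexive graph with SH and NH}, because \(X\) is a crossed module. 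Hence \(C\leq\Ker(\CoindArr{f\and 1_X})_\Grp\).

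The explicit computation around \eqref{eq - a word in the commutator lies in the kernel} gives the same conclusion on the basic commutators \([gs_X(f(g))^{-1},\,ht_X(f(h))^{-1}]\). It could be cited instead, but one would then also have to argue that general elements of \(\Ker(s_G+s_X)\), which have the form \(ws(w)^{-1}\) for arbitrary words \(w\in G+X\), are covered. The homomorphism argument avoids this.

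\textbf{Main obstacle.} The only delicate point is identifying the Higgins commutator in \(\Grp\) with the ordinary commutator subgroup of the two normal subgroups, including the fact that it is normal in \(G+X\). The paper recalls both facts, and both follow from \NH{} together with the standard identity \(z[a,b]z^{-1}=[zaz^{-1},zbz^{-1}]\). With that identification, the proof is only the two short steps above.
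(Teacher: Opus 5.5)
Your proof is correct and takes essentially the paper's route. The paper gets \([\Ker(s_G+s_X),\Ker(t_G+t_X)]\leq\Ker(\CoindArr{f\and 1_X})_\Grp\) by factorising \(\CoindArr{f\and 1_X}\) through the quotient \(G+X\to G+_{\XMod}X\), whose kernel is this commutator; that factorisation rests on the same homomorphism argument from \cref{remark: induced morphism for the binary coproduct in XMod} that you give directly, and normality then follows, as you say, from normality of the commutator in \(G+X\).
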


\begin{proof}
	First we can easily see that it is a subgroup. This follows from the fact that \(\CoindArr{f\and 1_X}\) factorises through \(G+_{\XMod}X\) and that \([\Ker(s_G+s_X), \Ker(t_G+t_X)]\) is the kernel of the factorisation arrow \(G+X \to G+_{\XMod}X\).
	\[
		\begin{tikzcd}
			{[\Ker(s_G+s_X), \Ker(t_G+t_X)]} \arrow[rd, nmono] \arrow[d, induced]                                                                                                         \\
			\Ker(\CoindArr{f\and 1_X})_{\Grp} \arrow[r, nmono] & G+X \arrow[rr, "\CoindArr{f\and 1_X}"] \arrow[rd, repi] &                                                            & X \\
			&                                                         & G+_{\XMod}X \arrow[ru, "\overline{\CoindArr{f\and 1_X}}"']
		\end{tikzcd}
	\]
	where \(G+_{\XMod}X\coloneq\frac{G+ X}{[\Ker(s_G+ s_X) , \Ker(t_G+ t_X)]}\).

	Moreover, since \([\Ker(s_G+s_X), \Ker(t_G+t_X)]\) is normal in \(G+X\), the inclusion above tells us that \([\Ker(s_G+s_X), \Ker(t_G+t_X)]\) is a normal subgroup of \(\Ker(\CoindArr{f\and 1_X})\).
\end{proof}

\begin{lemma}
	\label{lem:kernel XMod}
	The crossed module
	\[
		\Biggl(\frac{\Ker(\CoindArr{f\and 1_X})_\Grp}{[\Ker(s_G+ s_X) , \Ker(t_G+ t_X)]} \:,\: s_\XMod \:,\: t_\XMod\Biggr)
	\]
	is the kernel of \(\overline{\CoindArr{f\and 1_X}}\).
\end{lemma}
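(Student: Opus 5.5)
The plan is to show that the displayed quotient is the kernel of \(\overline{\CoindArr{f\and 1_X}}\) in \(\Grp\), carrying the unary operations induced from \(G+_{\XMod}X\), and then to check that it lies in \(\XMod\) and is the kernel there. Write \(N\coloneq[\Ker(s_G+s_X),\Ker(t_G+t_X)]\) and \(q\colon G+X\to G+_{\XMod}X\) for the quotient. By \cref{remark: induced morphism for the binary coproduct in XMod} we have \(\CoindArr{f\and 1_X}=\overline{\CoindArr{f\and 1_X}}\,q\). By \cref{lemma - commutator in a normal subgroup of Ker}, \(N\) is a normal subgroup of \(\Ker(\CoindArr{f\and 1_X})_\Grp\).

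\textbf{Step 1 (kernel in \(\Grp\)).} Because \(q\) is surjective with kernel \(N\subseteq\Ker(\CoindArr{f\and 1_X})_\Grp\), we get
\[
\Ker(\overline{\CoindArr{f\and 1_X}})_\Grp=q\bigl(\Ker(\CoindArr{f\and 1_X})_\Grp\bigr)\iso \Ker(\CoindArr{f\and 1_X})_\Grp/N .
\]
Indeed, if \(\overline{\CoindArr{f\and 1_X}}(q(w))=e\), then \(\CoindArr{f\and 1_X}(w)=e\). This is the third isomorphism theorem, or equivalently the morphism of short exact sequences in \cref{remark: induced morphism for the binary coproduct in XMod}.

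\textbf{Step 2 (structure).} \(\overline{\CoindArr{f\and 1_X}}\) is a morphism of precrossed modules, so by \cref{thm - Higgins kernel distributive omega groups}, applied in the distributive \(\Omega\)-group \(G+_{\XMod}X\), its group-theoretic kernel is closed under \(\overline{s_G+s_X}\) and \(\overline{t_G+t_X}\). The restrictions of these operations are exactly \(s_\XMod\) and \(t_\XMod\). Concretely, for \(w\in\Ker(\CoindArr{f\and 1_X})_\Grp\) we have \(s_\XMod(wN)=(s_G+s_X)(w)N\), and \((s_G+s_X)(w)\) remains in the kernel by the computation in the remark on normality via distributive \(\Omega\)-groups. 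This gives the kernel in \(\PXMod\), in the same way as \cref{lemma: kernel in PXMod}.

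\textbf{Step 3 (in \(\XMod\)).} \(\XMod\) is a full subcategory of \(\PXMod\) closed under subobjects: \([\Ker(s),\Ker(t)]\) can only shrink when restricted to an \(\Omega\)-subgroup, by \cref{cor - reflexive graph with SH and NH}. Concretely, the kernels of the restricted \(s_\XMod\) and \(t_\XMod\) are contained in those of \(\overline{s_G+s_X}\) and \(\overline{t_G+t_X}\), whose commutator is trivial because \(G+_{\XMod}X\) is a crossed module. Hence the quotient is a crossed module. Since the inclusion \(\XMod\to\PXMod\) is a right adjoint (\cref{lemma:adjunction between XMod PXMod under SH with Huq}), it preserves and reflects limits along the full embedding, so the \(\PXMod\)-kernel, lying in \(\XMod\), is the \(\XMod\)-kernel. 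One can also check the universal property directly: a crossed-module morphism \(h\) with \(\overline{\CoindArr{f\and 1_X}}h=0\) factors through the \(\PXMod\)-kernel, and by fullness that factorization is a morphism in \(\XMod\).

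The main obstacle is Step 1's identification of \(q(\Ker(\CoindArr{f\and 1_X})_\Grp)\) with the whole kernel, which rests on \(N\subseteq\Ker(\CoindArr{f\and 1_X})_\Grp\). This inclusion was established via the word decomposition preceding the theorem, and through \cref{lemma - commutator in a normal subgroup of Ker}. The rest is bookkeeping.
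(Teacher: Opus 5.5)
Your proof is correct and follows the paper's route. Your Step~1 is the same isomorphism-theorem argument the paper gives, and the direct identification \(\Ker\overline{\CoindArr{f\and 1_X}}=q\bigl(\Ker(\CoindArr{f\and 1_X})_\Grp\bigr)\) is a cleaner phrasing than the paper's chain of abstract quotient isomorphisms. Steps~2 and~3 spell out what the paper leaves implicit: closure under the unary operations, and that this kernel is a crossed module and hence the \(\XMod\)-kernel. Note that the inclusion of the commutator \(N\) in \(\Ker(\CoindArr{f\and 1_X})_\Grp\) is immediate from the factorisation through \(q\), so it is not the real obstacle you flag. Likewise, closure in Step~2 follows directly from \(\overline{\CoindArr{f\and 1_X}}\) commuting with the operations, so it does not need Higgins' theorem.
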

\begin{proof}
	By some isomorphism theorems, we have
	\[
		\frac{(G+X)/[\Ker(s_G+s_X) , \Ker(t_G+t_X)]}{\Ker(\CoindArr{f\and 1_X})_{\Grp}/[\Ker(s_G+s_X) , \Ker(t_G+t_X)]} \iso \frac{G+X}{\Ker(\CoindArr{f\and 1_X})_{\Grp}} \iso X \iso \frac{G+_{\XMod}X}{\Ker\overline{\CoindArr{f \and 1_X}}}
	\]
	and thus our kernel is \(\Ker\overline{\CoindArr{f \and 1_X}} \iso \Ker(\CoindArr{f\and 1_X})_{\Grp}/[\Ker(s_G+s_X) , \Ker(t_G+t_X)]\) (since \(G+_{\XMod}X\) is the quotient \((G+X)/[\Ker(s_G+s_X) , \Ker(t_G+t_X)]\)).
\end{proof}

\begin{remark}\label{rem - alternative proof for the kernel}
	Another possible way of understanding the above chain of isomorphisms is expressed in
	\[
		\adjustbox{scale=0.65}{\begin{tikzcd}
				& 0 \arrow[d]                                                                                                & 0 \arrow[d]                                                              & 0\arrow[d]                                                     \\
				{[\Ker(s_{\Ker(\CoindArr{f\and 1_X})_\Grp}), \Ker(t_{\Ker(\CoindArr{f\and 1_X})_\Grp})]} \arrow[d, nmono] \arrow[r, "\iota", mono, induced]             & {[\Ker(s_G+s_X), \Ker(t_G+t_X)]} \arrow[d, nmono]\arrow[r, equal]                                          & {[\Ker(s_G+s_X), \Ker(t_G+t_X)]} \arrow[d, nmono]\arrow[r]               & {[\Ker(s_X),\Ker(t_X)]}=\{e_X\} \arrow[d, nmono] \arrow[r] & 0 \\
				\Ker(\CoindArr{f\and 1_X})_\Grp \arrow[r, equal]\arrow[d, repi]                                                                                         & \Ker(\CoindArr{f\and 1_X})_\Grp \arrow[r, nmono]\arrow[d, repi]                                            & G+X\arrow[r, "\CoindArr{f\and 1_X}", repi] \arrow[d, repi]               & X\arrow[d, repi] \arrow[r]                                 & 0 \\
				\frac{\Ker(\CoindArr{f\and 1_X})_\Grp}{[\Ker(s_{\Ker(\CoindArr{f\and 1_X})_\Grp}), \Ker(t_{\Ker(\CoindArr{f\and 1_X})_\Grp})]} \arrow[r, induced, repi] & \frac{\Ker(\CoindArr{f\and 1_X})_\Grp}{[\Ker(s_G+s_X), \Ker(t_G+t_X)]} \arrow[r, induced, nmono] \arrow[d] & G+_{\XMod}X\arrow[r, "\overline{\CoindArr{f\and 1_X}}"', repi] \arrow[d] & \frac{X}{[\Ker(s_X),\Ker(t_X)]}\cong X \arrow[r] \arrow[d] & 0 \\
				& 0                                                                                                          & 0                                                                        & 0
			\end{tikzcd}}
	\]
	where we can apply the \(3 \times 3\) lemma~\cite[Theorem~4.2.7]{BB04} on the right-hand part of the diagram.

	A third alternative proof would be to apply the Snake lemma~\cite[Theorem~4.4.2]{BB04} on the following diagram.
	\[
		\begin{tikzcd}
			0 \arrow[r] & {[\Ker(s_G+s_X), \Ker(t_G+t_X)]} \arrow[d, nmono]\arrow[r, equal] & {[\Ker(s_G+s_X), \Ker(t_G+t_X)]} \arrow[d, nmono]\arrow[r] & 0 \arrow[d, nmono] \arrow[r] & 0 \\
			0 \arrow[r] & \Ker(\CoindArr{f\and 1_X})_\Grp \arrow[r, nmono]                  & G+X\arrow[r, "\CoindArr{f\and 1_X}"', repi]                & X \arrow[r]                  & 0
		\end{tikzcd}
	\]
\end{remark}

\subsection{The obstruction and open questions}\label{sec - chi problem with XMod}
For \(\PXMod\) the main difficulties to define the embedding were the description of the power object (see \cref{subsubsec - power object in PXMod}) and the description of a situation where the group morphism
\[\chi\colon \Ker(\CoindArr{f\and 1_X})\to A \mapping (g,x)\mapsto \theta(x)\cdot g\cdot \theta(x\cdot f(g))^{-1}\]
becomes a morphism of precrossed modules (see \cref{sec - suitable sections in PXMod}). For \(\XMod\), the description of the power object and the application of Gray's construction to complete the definition of the left adjoint \(R\) is not very difficult. The main issue is about the morphism \(\chi\). Indeed, its domain is now a quotient (see \cref{thm - kernel in XMod final version}) of the one in groups/precrossed modules. In particular, the neutral element in the quotient must be sent by \(\chi\) to the neutral element in \(A\) but this seems impossible without any further assumption on the reflexive graph section \(\theta\).

Recall that when we define \(\chi\) for \(\Grp\) and \(\PXMod\), we use the presentation \(\langle G\times X\mid R\rangle\), where \(R\) is as in \eqref{eq: relation for kernel in group}. In this presentation, an element \((g,x)\in G\times X\) is seen as the word \(xgf(g)^{-1}x^{-1}\) in \(G+X\).

Let us understand the commutator \([\Ker(s_G+ s_X) , \Ker(t_G+ t_X)]\) in terms of generators \(G\times X\) (since we use this presentation to define \(\chi\) in the previous cases). Consider, for instance, the element
\begin{equation*}
	\bigl(xs_X(x)^{-1}\bigr)\bigl(ht_G(h)^{-1}\bigr)\bigl(s_X(x)x^{-1}\bigr) \bigl(t_G(h)h^{-1}\bigr)\in [\Ker(s_G+ s_X) , \Ker(t_G+ t_X)]
\end{equation*}
where \(x\in X\) and \(h\in G\). Using the same technique as in the beginning of \cref{subsubsection KLE in XMod}, we can rewrite this element as
\begin{align*}
	\bigl(xs_X(x)^{-1}\bigr)\bigl(ht_G(h)^{-1}\bigr)\bigl(s_X(x)x^{-1}\bigr) \bigl(t_G(h)h^{-1}\bigr) & =\omega_1 x_1 t_G(h)h^{-1}=\omega_1 f(h)t_X(f(h))^{-1}t_G(h)h^{-1}                       \\
	                                                                                                  & =\omega_1\omega_2 \left(f(h)t_X(f(h))^{-1}f(t_G(h))f(h)^{-1}\right)    =\omega_1\omega_2
\end{align*}
where
\begin{itemize}
	\item \(\omega_1=xs_X(x)^{-1}ht_G(h)^{-1}f(t_G(h))f(h)^{-1}s_X(x)x^{-1}\in \Ker(\CoindArr{f\and 1_X})_{\Grp}\)
	\item since \(xs_X(x)^{-1}f(h)t_X(f(h))^{-1}s_X(x)x^{-1}t_X(f(h))f(h)^{-1}\in[\Ker(s_X),\Ker(t_X)]=\{e_X\}\), we have \(x_1=f(h)t_X(f(h))^{-1}\);
	\item \(\omega_2=f(h)t_X(f(h))^{-1}t_G(h)h^{-1}f(h)f(t_G(h))^{-1}t_X(f(h))f(h)^{-1}\in \Ker(\CoindArr{f\and 1_X})_{\Grp}\).
\end{itemize}

As a result, in terms of the generators of the presentation, the word \(\omega_1\omega_2\) may be understood as
\begin{equation}\label{eq - commutator in terms of generator for chi}
	(ht_G(h)^{-1}, x s_X(x)^{-1})(t_G(h)h^{-1}, f(h)t_X(f(h))^{-1})\in G\times X\text{.}
\end{equation}
If we consider the same \(\chi\) as for \(\Grp\) and \(\PXMod\) and want it to factorise through our quotient, then the image of~\eqref{eq - commutator in terms of generator for chi} via \(\chi\) must be trivial. However, this is not the case since we have
\begin{align*}
	 & \chi\bigl((ht_G(h)^{-1}, x s_X(x)^{-1})(t_G(h)h^{-1}, f(h)t_X(f(h))^{-1})\bigr)                                     \\
	 & =\left(\theta(xs_X(x)^{-1})\cdot ht_G(h)^{-1}\cdot \theta(xs_X(x)^{-1}\cdot f(ht_G(h)^{-1}))^{-1}\right)            \\
	 & \left(\theta(f(h)t_X(f(h))^{-1})\cdot t_G(h)h^{-1}\cdot \theta(f(h)t_X(f(h))^{-1}\cdot f(t_G(h)h^{-1}))^{-1}\right)
\end{align*}
which turns to be equal to the neutral element only if the reflexive graph section \(\theta\) is also a group morphism, i.e.\ a section of (pre)crossed module for \(f\).

To see this, set \(u\eqdef xs_X(x)^{-1}\in \Ker(s_X)\) and \(v\eqdef f(h)t_X(f(h))^{-1}\in \Ker(t_X)\), so that \(f(ht_G(h)^{-1})=v\). Writing \(a\eqdef ht_G(h)^{-1}\theta(v)^{-1}\in A\) and letting \(\beta(u,v)\eqdef \theta(u)\theta(v)\theta(uv)^{-1}\in A\) be the multiplicativity defect of the section, the expression above becomes
\[
	\bigl(\theta(u)\,a\,\theta(u)^{-1}\bigr)\,\beta(u,v)\,a^{-1}\,\theta(e)^{-1}\text{.}
\]
When \(\theta\) is a group morphism we have \(\beta(u,v)=e\) and \(\theta(e)=e\), so this reduces to the commutator \([\theta(u),a]\); and since \(\theta(u)\in \Ker(s_G)\) and \(a\in \Ker(t_G)\), it vanishes because \([\Ker(s_G),\Ker(t_G)]=\{e\}\) in the crossed module \(G\). Thus the Peiffer condition on \(G\) takes care of the conjugation part, and the only genuine obstruction is the multiplicativity defect \(\beta(u,v)\), which survives in \(A\) whenever \(\theta\) fails to be multiplicative.

From the above discussion, it is clear that we cannot follow the same strategy as we did for \(\PXMod\): if we try to do so, we would have to restrict ourselves to split extensions, which is a much stricter condition than what we asked for precrossed modules and takes us back to the fact that \(\XMod\) is \LACC{}\@, a result already well known.

\section*{Acknowledgments}
Many thanks to Tim Van der Linden, our PhD supervisor, for bringing us together on a topic that combines our areas of expertise and our thanks as well for the discussions and feedback that enhanced this document.

\printbibliography    

\end{document}